\documentclass[12pt,reqno]{amsart}

\usepackage{graphicx}
\usepackage{amsmath}
\usepackage{amssymb}
\usepackage{amscd}
\usepackage{hhline}
\usepackage{dcpic}
\usepackage{pictex}
 \textheight=605pt

\newtheorem{theorem}{Theorem}

\newtheorem{theoremd}{Theorem}

\newtheorem{dfn}[theoremd]{Definition}
\newtheorem{rk}[theoremd]{Remark}

\renewcommand\a{\alpha}
\renewcommand\b{\beta}

\newcommand\C{{\mathbb C}}
\newcommand\CC{{\mathcal C}}
\renewcommand\d{\delta}

\newcommand\D{{\mathcal D}}

\newcommand\E{\mathcal{E}}

\newcommand\g{\mathfrak{g}}

\renewcommand\l{\lambda}
\newcommand\La{\Lambda}
\newcommand\m{\mathfrak{m}}

\newcommand\oo{\omega}
\newcommand\op[1]{\mathop{\rm #1}\nolimits}
\newcommand\ot{\otimes}
\newcommand\p{\partial}

\newcommand\R{{\mathbb R}}
\renewcommand\t{\tau}

\newcommand\z{\sigma}
\newcommand\Z{{\mathbb Z}}
\newcommand{\comm}[1]{}

\begin{document}

 \title[Lie-B\"acklund theorem for Lie class $\oo=1$ systems]{Generalized Lie-B\"acklund theorem for \\
 Lie class $\oo=1$ overdetermined systems}
 \author{Boris Kruglikov}
 \date{}
 \address{Institute of Mathematics and Statistics, University of Troms\o, Troms\o\ 90-37, Norway. \quad
E-mail: boris.kruglikov@uit.no.}
 \keywords{Lie class 1, symmetry, overdetermined systems of PDEs, characteristic, Cauchy characteristic,
symbols, compatibility, Tanaka algebra.}
 \subjclass[2010]{58J72, 58A30; 58J70, 49J15, 58A20}

 \vspace{-14.5pt}
 \begin{abstract}
In this paper we prove a version of Lie-B\"acklund theorem for
overdetermined systems of scalar PDEs, whose general solution
depends on 1 function of 1 variable. This generalizes the case
of involutive system of the second order on the plane treated
by E.Cartan in 1910. The approach is based on the geometric theory
of PDEs and Tanaka theory. Many examples are provided.
 \end{abstract}

 \maketitle

\section*{Introduction}

Consider the space $J^k\pi$ of $k$-jets of sections of a vector bundle $\pi:E\to M$.
This manifold carries the canonical Cartan distribution $\CC_k$ (also known as higher contact
vector distribution). Diffeomorphisms of $J^k\pi$ preserving the Cartan distributions are called
Lie transformations.

The classical Lie-B\"acklund theorem states that a Lie transformation $F:J^k\pi\to J^k\pi$
is the prolongation of a Lie transformation $f$ of $J^\epsilon\pi$: $F=f^{(k-\epsilon)}$,
where $\epsilon=1$ and $f$ is a contact transformation of $J^1\pi$ in the case $\op{rank}\pi=1$,
while $\epsilon=0$ and $f$ is a diffeomorphism of $J^0\pi$ (point transformation)
for $\op{rank}\pi>1$.

Various generalizations of this to Lie transformations of differential equations $\E\subset J^k\pi$
have been discovered since, and they became known as Lie-B\"acklund (type) theorems.
Namely an internal transformation is a symmetry of the induced distribution $\CC_\E=\CC\cap T\E$ on $\E$.
Lie-B\"acklund theorem holds for $\E$ if the internal symmetries coincide with the external symmetries,
which are (restrictions of) those Lie transformations of the ambient jet-space that preserve $\E$.

An instance of such theorem for scalar second order equations was proved in \cite{C$_2$} (parabolic equation)
and \cite{GK} (hyperbolic and elliptic PDE). For many classes of equations ($\CC$-general, normal) this
phenomenon was established in \cite{KLV,AKO}.

But Lie-B\"acklund theorem does not always hold. It is easily seen to fail for a single scalar PDE
of 1st order $F(x^1,\dots,x^n,u,u_1,\dots,u_n)=0$ for which the internal symmetry group is
infinite-dimensional. Another important counter-example
is the Hilbert-Cartan equation $y'=(z'')^2$, for which the external symmetry group
is 9-dimensional\footnote{This is the parabolic subgroup $P_2\subset G_2$
giving the contact gradation.}
vs. 14-dimensional internal symmetry group, though the latter coincides
with the space of generalized Lie-B\"acklund symmetries \cite{AKO,Ker}.

More general Monge equations $y^{(m)}=F(x,y,\dots,y^{(m-1)},z,\dots,z^{(n)})$ were considered in \cite{AK}.
If $m=1,n>2$ or $m>1$ then Lie-B\"acklund theorem holds provided the equation is non-degenerate
(this condition meaning that $F$ is not affine in $z^{(n)}$ cannot be dropped) and the
external symmetries concern the ambient mixed jet-space $J^{m,n}(\R,\R\times\R)$.

The Monge equations are included into a more general class of Monge systems (systems of ODEs with the
minimal degree under-determinacy), which are the natural target of reduction for so-called Lie
class $\oo=1$ compatible overdetermined PDE systems. These are the systems $\E$ whose general
solution depends on 1 function of 1 variable.

Lie class $\oo=1$ PDE systems are integrable by ODE methods \cite{L,K$_3$}, and they
often arise as symmetry reductions of more complicated PDEs
(a Darboux integrable or semi-integrable equation coupled with
one intermediate integral is a particular case of class $\oo=1$ system).
Lie-B\"acklund theorem clearly fails for Lie class $\oo=1$ overdetermined PDE systems, since
the internal symmetry group is always infinite-dimensional while the external group is usually not.

An important example of such $\E$ constitute involutive overdetermined systems of 2nd order scalar
PDEs on the plane considered by E.Cartan ("le th\'eor\`eme imporant" \cite[\S26]{C$_2$}).
He indicated a generalization of Lie-B\"acklund theorem to this case,
namely the internal group is changed to the symmetry group
of the reduction $(M,\Delta)$ of $(\E,\CC_\E)$ by the Cauchy characteristic $\Pi\subset\CC_\E$
and then it is bijective with the external symmetry group.

This was generalized to involutive 2nd order PDE systems in $n$-dimensions in \cite{Y$_1$,Y$_2$}.
The purpose of this paper is to prove

 \begin{theorem}\label{THM}
Let $\E$ be a Lie class $\oo=1$ overdetermined compatible system of PDEs
of orders greater than 1. If $\E$ is sufficiently non-degenerate, then the symmetry algebra
of the reduction $(M,\Delta)$ is equal to the Lie algebra of external symmetries of $\E$.
 \end{theorem}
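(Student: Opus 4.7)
The plan is to construct mutually inverse Lie-algebra homomorphisms between the external symmetry algebra $\mathfrak{s}_{\mathrm{ext}}(\E)$ and the symmetry algebra $\mathfrak{s}(M,\Delta)$ of the reduction. One direction is immediate: any external symmetry preserves $\E$ and the induced distribution $\CC_\E = \CC_k \cap T\E$, hence the canonically-defined Cauchy characteristic $\Pi \subset \CC_\E$, and therefore descends to the quotient $(M,\Delta)$. This yields a restriction homomorphism $\rho\colon \mathfrak{s}_{\mathrm{ext}}(\E) \to \mathfrak{s}(M,\Delta)$, injective because $\E$ fibers over $M$ via the Cauchy foliation and a Lie transformation of the ambient jet space is determined by its action on $\E$.

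Producing an inverse $\sigma$ proceeds by two successive lifts. First, the Cauchy projection $\E \to M$ is a bundle whose fibers inherit, by virtue of the class $\oo=1$ hypothesis, a canonical jet-like structure of curves in one variable, so a symmetry of $(M,\Delta)$ prolongs to an internal symmetry of $(\E,\CC_\E)$ by its natural action on these jets. Second, this internal symmetry must be shown to extend to a Lie transformation of the ambient jet space $J^k\pi$. Here Tanaka theory is essential: one wants to reconstruct the jet tower $J^k\pi \to J^{k-1}\pi \to \cdots$ intrinsically on $\E$ out of the symbol module of $\CC_\E$ together with its characteristic filtration, so that any internal symmetry automatically preserves this intrinsic data and hence extends externally. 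The sought isomorphism $\sigma = \rho^{-1}$ is then the composition of these two canonical lifts.

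The hardest step, and the one that dictates the non-degeneracy hypothesis, is precisely this second lift: formulating the condition on $\E$ which guarantees that the ambient jet tower is intrinsically reconstructible from $\CC_\E$. It must rule out counter-examples like the first-order scalar PDE and the affine Monge equations of \cite{AK}, and generalize Cartan's non-parabolicity from \cite{C$_2$} as well as the non-affine-in-highest-derivative condition for Monge equations. Concretely, one expects a genericity requirement on the symbol module of $\E$ — non-vanishing of a specific Spencer cohomology group, and/or full rank of the map from characteristics to the principal symbol — so that the characteristic variety of $\E$ uniquely detects the layers of the jet tower. Once this intrinsic reconstruction is in place, the identities $\rho\circ\sigma=\mathrm{id}$ and $\sigma\circ\rho=\mathrm{id}$ follow by naturality of jet-prolongation along the Cauchy bundle and the fact that both constructions are inverse on the level of underlying geometric structures, completing the proof.
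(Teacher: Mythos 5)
Your skeleton (descend external symmetries to the reduction, then lift back by reconstructing the ambient contact structure intrinsically) matches the paper's strategy in outline, but both of your ``lifts'' are left as plans whose proposed mechanisms are not the ones that actually work, so the proof has genuine gaps. For the first lift you assert that the Cauchy fibers of $\varpi:\E\to M$ carry ``a canonical jet-like structure \dots by virtue of the class $\oo=1$ hypothesis''. This is false as stated: class $\oo=1$ by itself gives only the product identification $(\E,\CC_\E)\simeq(M,\Delta)\times(\Pi,\Pi)$, which forgets the contact structure, and indeed the map (\ref{LBT}) fails to be an isomorphism for degenerate class $\oo=1$ systems (the Goursat/linearizable systems of Section \ref{S2}, and the examples of Section \ref{S8}). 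The mechanism in the paper is the rotation condition (\textsl{R\,}) of Section \ref{S4}: the distribution $\upsilon_{s-2}$ rotates along $\Pi$, so the map sending a point of $\E$ to the subspace $\varpi_*(\upsilon_{s-2}+\Pi)$ identifies $\E$ locally with the bundle $P_{s-2}=\{V^{s-2}:\square_{s-3}\subset V^{s-2}\subset\nabla_{s-2}\}$, i.e.\ with a prolongation of the rank 2 distribution $\nabla_{s-2}/\square_{s-3}$; since $\nabla_{s-2}$ and $\square_{s-3}$ come from the strong derived flag of $\Delta$, this bundle is functorial in $(M,\Delta)$, and that is what makes the lift exist and be unique. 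It also yields the injectivity of your $\rho$, which you assert but do not prove: the possible kernel consists of external symmetries sliding along the Cauchy fibers over the identity of $M$, and your remark that a Lie transformation is determined by its restriction to $\E$ does not exclude these.

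For the second lift you appeal to Tanaka theory and to a condition on the symbol module / Spencer cohomology that you explicitly decline to formulate (``one expects a genericity requirement\dots''), while acknowledging this is the hardest step; deferring it leaves the argument incomplete, and the guess is off target. The paper's condition is of bracket type, not cohomological: condition (\textsl{G\,}) demands that the iterated brackets $\op{ad}^\infty_{\upsilon_{s-2}}(\hat\nabla_{s-1})$, followed by the strong derived flag, recover the contact distribution $\CC_1$ lifted to $\E$. Once the pair $(\hat\nabla_{s-2},\upsilon_{s-2})$ --- hence $\CC_1$ --- is reconstructed from $(M,\Delta)$, the lifted symmetry preserves $\CC_1$, and the classical Lie-B\"acklund theorem (together with the hypothesis that all orders exceed $1$, so that $\E$ projects onto $J^1$; this is a hypothesis of the theorem, not part of the non-degeneracy condition as you suggest) shows it is a contact transformation preserving $\E$, i.e.\ an external symmetry. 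Tanaka theory plays no role in this proof; the paper uses it only in the applications to compute symmetry dimensions. Finally, you do not treat $n>2$ at all, where the strengthened condition (\textsl{R$_+$}) on the filtration $\Pi=\Pi_0\supset\Pi_1\supset\dots\supset\Pi_{n-2}$ and the identification of $\Pi$ with an $(n-1)$-fold iterated prolongation are required.
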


The requirement of sufficient non-degeneracy is technical and will be formulated,
after we introduce some preliminary material, in Section \ref{S4}.
Then in Section \ref{S5} we will prove the main result.

We will apply it to calculate the group of contact transformations of some model overdetermined
systems of PDEs. To find the complete symmetry group we will elaborate upon the Tanaka theory
of symmetries of vector distributions.

We shall also discuss limitations of the theorem, by showing examples of degenerate systems
when the external and internal symmetries are different.
Here is the version of Theorem \ref{THM} for two different equations
(by micro-local we mean in a neighborhood of a point on the equation).

 \begin{theorem}\label{THM2}
Two Lie class $\oo=1$ overdetermined compatible sufficiently non-degenerate systems
$\E,\E'$ of orders greater than 1 are micro-locally equivalent if and only if their
reductions $(M,\Delta)$, $(M',\Delta')$ are.
 \end{theorem}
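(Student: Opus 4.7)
The forward implication is essentially tautological. The Cauchy characteristic $\Pi\subset\CC_\E$ is intrinsically characterized as the maximal sub-distribution whose flows are automorphisms of $\CC_\E$; hence any micro-local contact equivalence $\Phi:\E\to\E'$ sends $\Pi$ to $\Pi'$, and upon passing to quotients it descends to a local equivalence $\phi:(M,\Delta)\to(M',\Delta')$ of the reductions.

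For the converse, my plan is to reduce to Theorem~\ref{THM} by a doubling argument. Working in a neighborhood of the two reference points $p\in\E$ and $p'\in\E'$, one can view the disjoint union $\E\sqcup\E'$ as a single Lie class $\oo=1$ compatible system: its reduction is $M\sqcup M'$, and sufficient non-degeneracy is inherited component-wise since the Tanaka symbol decomposes as a direct sum over the two components. A local equivalence $\phi:(M,\Delta)\to(M',\Delta')$ together with its inverse defines a local diffeomorphism of $M\sqcup M'$ that swaps the two components and preserves $\Delta\sqcup\Delta'$. Applying Theorem~\ref{THM} to $\E\sqcup\E'$ lifts this symmetry of the reduction to an external symmetry of $\E\sqcup\E'$, and its restriction to a neighborhood of $p$ is the sought-after contact equivalence $\Phi:\E\to\E'$, which automatically satisfies $\pi'\circ\Phi=\phi\circ\pi$.

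The main obstacle is strengthening Theorem~\ref{THM} from its stated Lie-algebraic form to the equivalence of local pseudogroups required here. This strengthening is not an extra hypothesis but a consequence of how Theorem~\ref{THM} is proved in Section~\ref{S5}: the lifting procedure is a constructive Tanaka-theoretic algorithm that uses only intrinsic invariants of $(\E,\CC_\E)$ and $(M,\Delta)$, and therefore transports finite local diffeomorphisms just as readily as infinitesimal symmetries. One has to trace through the prolongation steps and verify their naturality under isomorphisms, which amounts to the observation that each step commutes with pullback once the non-degeneracy hypothesis is used to rigidify the choices, so that the lift $\Phi$ is uniquely determined by $\phi$ together with its value at a single point of the Cauchy fibre over $q=\pi(p)$.
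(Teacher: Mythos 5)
Your forward implication is fine and agrees with what the paper leaves implicit: the Cauchy characteristic space $\Pi$ is internally defined, so any micro-local equivalence $\Phi$ sends $\Pi$ to $\Pi'$ and descends to the reductions. The converse, however, is carried by a device that does not work as stated. Theorem~\ref{THM} is a statement about \emph{Lie algebras} of infinitesimal symmetries: applied to $\E\sqcup\E'$ it would identify $\op{Sym}(M\sqcup M',\Delta\sqcup\Delta')$ with the algebra of external symmetry \emph{vector fields} of $\E\sqcup\E'$. The swap map assembled from $\phi$ and $\phi^{-1}$ is a discrete diffeomorphism interchanging the two components; it is not the flow of any vector field (flows preserve connected components), so equality of symmetry algebras says nothing about lifting it. Hence ``applying Theorem~\ref{THM} to $\E\sqcup\E'$'' yields nothing, and the doubling reduction is vacuous as written. (Smaller issues: a disjoint union of two equation germs, possibly sitting in different jet spaces, is not a system $\E\subset J^k$ in the paper's sense; and ``the Tanaka symbol decomposes as a direct sum over the components'' is not meaningful --- the conditions (\textsl{N}), (\textsl{R$_+$}), (\textsl{G}) are local, so they hold on a union if and only if they hold on each piece, with no symbol argument involved.)

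You flag this obstacle yourself and repair it by asserting that the Section~\ref{S5} argument is a canonical reconstruction, natural under isomorphisms, and therefore transports finite local equivalences as well as vector fields. That assertion is correct --- and it \emph{is} the paper's proof of Theorem~\ref{THM2}: under (\textsl{R$_+$}) the pair $(\E,\upsilon_{s-2}+\Pi)$ is canonically identified with the iterated prolongation of $\bar\Delta=\nabla_{s-2}/\square_{s-3}$, an object constructed purely from the internal geometry of $(M,\Delta)$, and (\textsl{G}) then recovers the contact distribution; so $\phi$ induces a map of the prolongation spaces and, conjugating with these canonical identifications, the required micro-local equivalence $\Phi$ with $\varpi'\circ\Phi=\phi\circ\varpi$. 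But once this strengthening is granted, the doubling is superfluous: one lifts $\phi$ directly, and your proof collapses to the paper's one-step argument. Note also that your final clause overstates the residual freedom: since the identification of $\E$ with the prolongation space is canonical once (\textsl{R$_+$}) holds, $\Phi$ is determined by $\phi$ alone --- no value at a point of the Cauchy fibre need, or can, be prescribed (freedom of that kind occurs precisely in the degenerate examples of Section~\ref{S8}, where the map (\ref{LBT}) fails to be injective). So the essential content of your proposal, its last paragraph, is correct and coincides with the paper's proof; the doubling scheme presented as the main plan is both invalid with Theorem~\ref{THM} as stated and unnecessary after the stated strengthening.
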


\medskip

\textsc{Acknowledgment.} I thank Kazuhiro Shibuya and Toshihiro Shoda for invitation to
the conference "Differential Geometry and Tanaka Theory, Differential System and Hypersurface Theory"
in 2011, where I had useful discussions with Keizo Yamaguchi and Ian Anderson.
\textsc{Maple} package \textit{DifferentialGeometry} was used in several calculations.

\section{Symmetries of differential equations}\label{S1}

A differential equation $\E$ is often treated geometrically as a submanifold in the space
of jets $J^k(B,F)$, where $B$ is the space of independent variables $x=(x^1,\dots,x^n)$ and
$F$ is the space of dependent variables $u=(u^1,\dots,u^m)$, which we write as $u=u(x)$.
A choice of these coordinates on $J^0=B\times F$ yields canonical coordinates on the space $J^k$:
$(u^j_\z)$, where multiindex $\z=(i_1,\dots,i_n)$ has length $|\z|=\sum_{a=1}^n i_a\le k$.

We will assume regularity, i.e. that the projection maps $\pi_{k,l}:J^k\to J^l$
have constant ranks when restricted to $\E$ and its prolongations. Usually it is assumed that
the equation has pure order $k$ and then $\pi_{k,k-1}:\E\to J^{k-1}$ is surjective. But we will
allow PDEs in the system $\E$ to have different orders (of which $k$ is the maximal).
For the setup of this theory we refer to \cite{KL}.

The Cartan distribution $\CC_k$ on $J^k$ is given as the kernel of the forms
$\theta_\z^j=du^j_\z-\sum_{i=1}^nu^j_{\z+1_i}dx^i$ for all $j\in\{1,\dots,m\}$ and $|\z|<k$.
It is generated as follows:
 $$
\CC_k=\Bigl\langle\D_{x^i}=\p_{x^i}+\sum_{j;|\t|<k}u^j_{\t+1_i}\p_{u^j_\t},\ \p_{u^j_\z}\ :
\ 1\le i\le n,|\z|=k\Bigr\rangle
 $$
Cartan distribution $\CC_l$ on $J^l$ can be lifted to $J^k$ via $\pi_{k,l}^{-1}$.
In particular $\CC_1$ is the classical contact distribution.

These distributions for different $l$ are related by the formula\footnote{For brevity sake we will
write $[\nabla,\Delta]$ instead of more appropriate "distribution whose module of sections equals $[\Gamma(\nabla),\Gamma(\Delta)]$" (regularity assumed). Notice that we always have
$[\nabla,\Delta]\supset\nabla+\Delta$.}
 $$
[\CC_l,\CC_l]=\CC_{l-1}.
 $$

The classical Lie-B\"acklund theorem claims that a symmetry of distribution $\CC_k$ on $J^k$ is
necessarily the lift of a contact transformation from $J^1$ if $m=1$ or a point transformation
of $J^0$ if $m>1$. The proof follows almost immediately from the above display formula.

A symmetry of $(J^k,\CC_k)$, preserving $\E$, is called an external symmetry of the system.
The Lie algebra of all external symmetries is denoted by $\op{Sym}(\E)$. The internal symmetries
$\op{Sym}_{int}(\E)$ are by definition the symmetries of the induced distribution $\CC_\E=\CC_k\cap T\E$ on $\E$.

Restriction obviously gives the homomorphism $\op{Sym}(\E)\to \op{Sym}_{int}(\E)$.
Generalized Lie-B\"acklund theorems address the occasions when it is an isomorphism.

In what follows we will consider an overdetermined system of PDEs $\E$.
It will be assumed formally integrable, meaning that for all prolongations $\E_{k+t}=\E^{(t)}$
(given as the locus in $J^{k+t}$ of defining relations for $\E$ differentiated $\le t$ times)
the projections $\pi_{k+t,k+t-1}:\E_{k+t}\to\E_{k+t-1}$ are affine bundles.
In other words, all the compatibility conditions vanish.

For technical reasons we will need to prolong to the level $s=k+t$, when $\E_s$ is involutive.
This means vanishing of the Spencer cohomology $H^{i,j}(\E)=0$, $\forall i\ge s,j\ge0$, or
equivalently fulfilment of the Cartan test \cite{C$_3$,KLV}.
Then for $i\ge s$ the symbol
 $$
g_i=\op{Ker}(d\pi_{i,i-1}:T\E_i\to T\E_{i-1})
 $$
has dimension growth in accordance with Hilbert polynomial \cite{KL}.
If $x_i\in\E_i$ and $\pi_{i,0}(x_i)=(x,y)$, we can identify $g_i(x_i)\subset S^iT^*_xB\ot T_yF$.

\section{Lie class $\oo=1$ compatible systems}\label{S2}

The (complex) {\it characteristic variety\/} can be defined as projectivization
of the set of complex characteristic covectors
 $$
\op{Char}^\C_{x_i}(\E)=\mathbb{P}\{p\in T_x^*B\ot\C\ :\ p^k\ot T_yF\cap g_k^\C\ne0\}.
 $$

If $\op{Char}^\C(\E)=\emptyset$ then the system is of finite type, i.e. its solution space if finite-dimensional.
The next by complication case is when $\op{Char}^\C(\E)$ consists of one point
counted with multiplicity (then the characteristic variety must be real).
These are the systems of Lie class $\oo=1$ and their solutions are parametrized 1 function of
1 argument\footnote{In terminology of Elie Cartan $\oo$
is the {\it Cartan integer\/} $s_1$ (provided the Cartan character is 1: $s_2=0$).}.

Another way to describe such systems is the following. For large $i$ (precisely starting
from the level when $\E_i$ becomes involutive) the symbol of the system stabilizes: $\dim g_i=1$.
We refer to \cite{K$_2$} for a detailed discussion of these systems.

By a theorem of S. Lie compatible PDE systems of class $\oo=1$ are integrable via ODEs \cite{L}.
The proof given in \cite{K$_3$} uses the following observation: For such systems $\E$,
starting from the involutivity jet-level, the Cartan distribution $\CC_\E$ has
rank equal to $(n+1)$ and it contains the subspace $\Pi$ of Cauchy characteristics of rank $(n-1)$.
Since the prolongation, required to achieve involutivity, changes neither the class $\oo$ nor
the external symmetry group, we will assume that already $\E$ is involutive
(this does not restrict generality of the results).

The (local) quotient by the foliation tangent to $\Pi$ maps $(\E,\CC_\E)$ to a manifold
with rank 2 distribution. Let us call this pair $(M,\Delta)$ the reduction of our system.

Shifts along Cauchy characteristic direction are {\it trivial symmetries\/} of $\E$, so the space
of internal symmetries is always infinite (while this for external is usually not).
To compensate this we consider the symmetry algebra of the reduction $\op{Sym}(M,\Delta)$
(all our considerations are local). Since the subalgebra of trivial symmetries is an ideal
in $\op{Sym}_{int}(\E)$, we have the homomorphism induced by the restriction
 \begin{equation}\label{LBT}
\op{Sym}(\E)\to\op{Sym}(M,\Delta).
 \end{equation}
The goal of the paper is to demonstrate that this is an isomorphism (under certain
"general position" assumptions).

\smallskip

For simplicity of the exposition we restrict in the next section to the case of scalar PDEs ($m=1$)
on the plane ($n=2$). We will denote a system of differential equations of orders $k_1,\dots,k_r$
by $\sum_1^rE_{k_i}$.

A zoo of such systems is given in \cite{K$_2$}.
In this reference and \cite{K$_3$} it is shown that internal geometry of linearizable systems is quite simple:
their reduction correspond to Goursat distributions.

Recall that for a distribution $\Delta$ the {\it weak derived flag\/} is defined by
$\{\Delta_1=\Delta,\Delta_{i+1}=[\Delta,\Delta_i]\}$, and the {\it strong derived flag\/} is given
by the formula $\{\nabla_1=\Delta,\nabla_{i+1}=[\nabla_i,\nabla_i]\}$.
The sequences $\{\dim(\Delta_{i+1}/\Delta_i)\}$ and $\{\dim(\nabla_{i+1}/\nabla_i)\}$
will be called weak resp.\ strong {\it growth vectors\/}.

Goursat distributions have both weak and strong growth vectors $(2,1,1,\dots,1)$ and are
isomorphic to the Cartan distribution on the jet space $J^k(\R,\R)$. Thus the internal geometry
of the linear class $\oo=1$ overdetermined compatible systems is trivial.

On the other hand the external geometry (which is governed by the pseudogroup of
point triangular transformations) is rich and is characterized by differential invariants.

An interesting example of systems of type $E_2+E_3$ can be found already in Cartan (\cite{C$_2$}, p.147).
Modifying his PDEs a bit we get:
 $$
\E:\{u_{xy}=0,u_{yyy}=0\},\qquad \bar\E:\{u_{yy}=0,u_{xxy}=0\}.
 $$
Internally $\E\simeq\bar\E$ since the growth vectors of both reductions are $(2,1,1,1)$,
but the systems are not equivalent externally because the second order equations are hyperbolic and
parabolic respectively. Thus generalized Lie-B\"acklund theorem fails in this case.
By \cite{C$_2$} such situation is not possible for
systems of type $2E_2$.

\section{Two examples of class $\oo=1$}\label{S3}

Let us study some partial cases with $n=2$ independent variables.
We will use the classical notations $p=u_x$, $q=u_y$, $r=u_{xx}$, $s=u_{xy}$,
$t=u_{yy}$, $\a=u_{xxx}$, $\b=u_{xxy}$, $\gamma=u_{xyy}$, $\d=u_{yyy}$.

\smallskip

Consider at first the system $E_2+E_3$. Without loss of generality we can assume
that the characteristic\footnote{This vector belongs to the annihilator of
the characteristic covector $dx+A\,dy$.} is $\p_y-A\,\p_x$, where $A$ is a function on 2-jets
(more precisely on equation $E_2$). Thus the system $\E$ writes
 $$
t=F(r,s,\dots),\quad \b=A(r,s,\dots)\a+B(r,s,\dots),
 $$
where dots mean terms of order 1 (and $A$ satisfies $A^2=F_sA+F_r$).
The Cauchy characteristic vector field for the distribution $\CC_\E$ is
$\xi=\D_y-A\,\D_x+\varrho\,\p_\a$ for some function $\varrho$, and
it is transversal to $\Sigma:y=\op{const}$.

Consequently the reduced rank 2 distribution (on the quotient $M$) can be interpreted as the following
distribution on $\Sigma$: $\Delta^2=\CC_\E\cap T\Sigma$ (the value of constant plays no role).
In the canonical coordinates
 $$
\Delta=\langle\D_x=\p_x+p\,\p_u+r\,\p_p+s\,\p_q+\a\p_r+(A\a+B)\,\p_s, \p_\a\rangle.
 $$
This distribution has growth vector $(2,3,4,\dots)$ and so is de-prolongable, i.e. locally
$\Delta=\mathbb{P}(\bar\Delta)$ for some rank 2 distribution $\bar\Delta$ on 6-dimensional
manifold $\bar M$. Indeed, $\p_\a$ is the Cauchy characteristic for the derived distribution
$\Delta_2$.

Thus the weak derived flag $\{\Delta_i\}$ differs drastically from the strong derived flag $\{\nabla_i\}$.
One possibility is to perform de-prolongation (in this case it is easy - to restrict $\Delta_2$ to
the transversal $\bar M:\a=\op{const}$), but we will instead consider the strong derived flag.
It has the following generators, provided the strong growth vector is $(2,1,1,2,1)$:\footnote{This
is the case of general position; elsewise there are more de-prolongations
or there exists a first integral of the distribution.}
 \begin{alignat*}{2}
& \nabla_1=\langle e_1,\p_\a\rangle,         && e_1=-\D_x\\
& \nabla_2=\nabla_1+\langle e_1'\rangle,     && e_1'=[e_1,\p_\a]=\p_r+A\,\p_s\\
& \nabla_3=\nabla_2+\langle e_2\rangle,      && e_2=[e_1,e_1']=\p_p+A\,\p_q+(e_1'(B)-\D_x(A))\p_s\\
& \nabla_4=\nabla_3+\langle e_3,e_3'\rangle, && e_3=[e_1,e_2]=\p_u+\dots,\\
 & \hspace{120pt} && e_3'=[e_1',e_2]=e_1'(A)\p_q+C\p_s,\\
& \nabla_5=\nabla_4+\langle e_4\rangle=TM,   && \hspace{20pt} e_4=[e_1,e_3],\ [e_1,e_3']=[e_1',e_3]
 \text{ or } [e_1',e_3'],
 \end{alignat*}
where the coefficients, like $C$ or dots, will not be indicated explicitly (the growth vector of
the reduction on $\bar M$ is $(2,1,2,1)$ with the flag generated by vectors
$\langle e_1,e_1';e_2;e_3,e_3';e_4\rangle$).

If we know the vertical distribution\footnote{Notice that it is in the kernel of the bracket-map
$\La^2\nabla_2\to\nabla_3$ since $\p_\a$ is the Cauchy characteristic for $\nabla_2$.}
$\nabla_2'=\langle e_1',\p_\a\rangle$,
we can construct the distribution
$\nabla_4'=[\nabla_2',\nabla_3]=\langle e_1,e_1',e_2,e_3',\p_\a\rangle$.

Distribution $\Delta$ will be called {\it sufficiently non-degenerate\/} if $\nabla_5'=[\nabla_2',\nabla_4']$
has rank $6$ (i.e. we can use $[e_1',e_3']$ for $e_4$). In this case it contains the
fiber $\langle \p_p,\p_q,\p_r,\p_s,\p_\a\rangle$ of the projection
$d\pi_{3,0}:T\E\to TJ^0$ as a codimension 1 subspace,
so we re-cover the contact distribution on $J^1$ (lifted to $\E$):
$\CC_1=\nabla_5'+\langle\xi\rangle$.

\smallskip

The second particular type we want to examine is the system $3E_3$, which writes
 $$
\b=F(\a,\dots),\quad \gamma=G(\a,\dots),\quad \d=H(\a,\dots),
 $$
now dots mean terms of order 2. The condition of 1 common characteristic implies
$G_\a=F_\a^2$, $H_\a=F_\a^3$ and we assume $\E$ fully nonlinear $F_{\a\a}\ne0$ (this
is {\it sufficient non-degeneracy\/}; the quasi-linear case yields formulae similar
to the first example, so most of them also work).

The characteristic is then $\p_y-F_\a\p_x$, whence the Cauchy characteristic of $\CC_\E$
is $\xi=\D_y-F_\a\D_x+\varrho\,\p_\a$ for some function $\varrho$ on $\E$ and we again choose
the transversal as $\Sigma:y=\op{const}$.

This will be identified with the quotient $M$ and the induced rank 2 distribution is
 $$
\Delta=\langle\D_x=\p_x+p\,\p_u+r\,\p_p+s\,\p_q+\a\p_r+F\p_s+G\p_t, \p_\a\rangle.
 $$
Now due to full nonlinearity the square of this distribution does not have Cauchy characteristics
(i.e. it is not de-prolongable). This implies that the
first 3 elements of the weak and strong derived flags are the same, and so in our
case\footnote{This equality of weak and strong derived flags
can fail already when the length of the weak growth vector is 5, even in fully nonlinear case.}
both growth vectors are $(2,1,2,3)$.

The flags are given by
 \begin{alignat*}{2}
& \nabla_1=\langle e_1,e_1'\rangle,           && e_1=-\D_x,e_1'=\p_\a\\
& \nabla_2=\nabla_1+\langle e_2\rangle,       && e_2=[e_1,e_1']=\p_r+F_\a\p_s+F_\a^2\p_t\\
& \nabla_3=\nabla_2+\langle e_3,e_3'\rangle,  && e_3=[e_1,e_2]=\p_p+F_\a\p_q+C\p_s+D\p_t,\\
& \hspace{140pt} && e_3'=[e_1',e_2]/F_{\a\a}=\p_s+2F_\a\p_t,\\
& \nabla_4=\nabla_3+\langle e_4,e_4',e_4''\rangle=TM, && \hspace{20pt} e_4=[e_1,e_3]=\p_u+\dots,\\
& \hspace{20pt} e_4'=[e_1,e_3']=\p_q+\tilde C\,\p_s+\tilde D\,\p_t,\hspace{-20pt} && \hspace{40pt} e_4''=[e_1',e_3']/(2F_{\a\a})=\p_t.
 \end{alignat*}
Thus if we know the vertical distribution $\langle e_1'\rangle\subset\Delta$ we can re-cover
the Cartan distribution on $J^1$: $\CC_1=[e_1',\nabla_3]+\langle\xi\rangle$ (here we use the fact that
$e_4'=[e_1',e_3]/F_{\a\a}\,\op{mod}\nabla_3$).

Another way is to re-cover the contact distribution on $J^2$:
$\CC_2=[e_1',[e_1',\nabla_2]]+\langle\xi\rangle$
and then get $\CC_1=[\CC_2,\CC_2]$.

\smallskip

Of course, in both cases we do not know the vertical distributions from internal viewpoint.
However generically (if the system is sufficiently non-linear) they rotate inside the corresponding
internally-canonical distributions when moving along the Cauchy characteristic.
This crucial observation will allow us to prove the equivalence.

\section{Sufficient non-degeneracy}\label{S4}

We need to specify what conditions on $\E$ should be imposed to obtain the generalized
Lie-B\"acklund theorem. The first one is:
 $$
\text{Nonlinearity of }\E\ \Leftrightarrow\ \Delta\text{ is not Goursat.}
\leqno(N)
 $$
Thus the strong growth vector of $\Delta$ is $(2,1,\dots,1,2,\dots)$, i.e. there exists $s>2$ such that
$\dim(\nabla_s/\nabla_{s-1})=2$ and this is the first $2$ after a sequence of $1$ ($s$ is minimal).

Then $\dim(\nabla_s)=s+2$ and this distribution has $(s-3)$-dimensional sub-distribution
$\square_{s-3}\subset\nabla_{s-3}$ of Cauchy characteristics for $\nabla_s$
(that are also Cauchy characteristics for $\nabla_{s-2}$).

Denote the projection along Cauchy characteristic space $\Pi\subset\CC_\E$ by $\varpi:\E\to M$; it maps
$\CC_\E\to\Delta$. The strong growth vector of $\CC_\E$ is $(n+1,1,\dots,1,2,\dots)$, where the
first dimension $2$ occurs at the place $s$. The strong derived flag is
$\hat\nabla_i=\varpi^{-1}(\nabla_i)$.
Let $\hat\square_{s-3}=\varpi^{-1}(\square_{s-3})\subset\hat\nabla_{s-3}$
be the space of Cauchy characteristics for $\hat\nabla_s$ (and $\hat\nabla_{s-2}$).

Let $\upsilon_{s-2}=\hat\nabla_{s-2}\cap\op{Ker}(d\pi)$, where $\pi:\E\to B$ is the
natural projection in jets; $\op{rank}(\upsilon_{s-2})=s-2$. Then
 \begin{equation}\label{dag}
\hat\square_{s-3}\subset\upsilon_{s-2}+\Pi\subset\upsilon_{s-2}+\CC_\E=\hat\nabla_{s-2},
 \end{equation}
and both inclusions have codimension $1$: $\op{rank}(\hat\square_{s-3})=n+s-4$,
$\op{rank}(\upsilon_{s-2}+\Pi)=n+s-3$, $\op{rank}(\hat\nabla_{s-2})=n+s-2$.

Since $\Pi$ is the space of Cauchy characteristics,
we have $[\Pi,\hat\nabla_{s-2}]=\hat\nabla_{s-2}$.
Our second requirement is that the space $\upsilon_{s-2}$ rotates along $\Pi$:
 $$
[\Pi,\upsilon_{s-2}]=\hat\nabla_{s-2}.
\leqno(R)
 $$
This condition holds if the system exhibits some degree of non-linearity;
for instance, a system of type $kE_k$ with $n=2$ satisfies (\textsl{R\,})
whenever the equations in it are not all quasi-linear.


Denote the operation of taking bracket with $\upsilon_{s-2}$ by $\op{ad}_{\upsilon_{s-2}}$.
We have $\op{ad}_{\upsilon_{s-2}}(\hat\nabla_{s-1})\subset\hat\nabla_s$.
Let us continue taking brackets and denote the limit by $\op{ad}^\infty_{\upsilon_{s-2}}(\hat\nabla_{s-1})$.
Our third requirement is that this latter generates the Cartan distribution:
 $$
(k-s+1)\text{-strong derived of }\op{ad}^\infty_{\upsilon_{s-2}}(\hat\nabla_{s-1})
\text{ is equal to }\CC_1.
\leqno(G)
 $$
Actually, if $(k-s+2)$ is less than the minimal order of $\E$, we can formulate the condition
in a simpler form:
 $$
\op{ad}^\infty_{\upsilon_{s-2}}(\hat\nabla_{s-1})=\CC_{k-s+2}.
\leqno(G')
 $$
This implies, in particular, that the Cartan distribution $\CC_\E$ on $\E$
is  completely non-holonomic, i.e. its iterated brackets generate $T\E$.

Finally let us consider the condition on the space of Cauchy characteristics,
which arise only in the case $n>2$ (when $\dim\Pi=n-1>1$).
Since the inclusion $\Pi+\upsilon_{s-2}\subset[\Pi,\hat\nabla_{s-2}]=\hat\nabla_{s-2}$
has codimension 1, condition (\textsl{R\,})
implies existence of codimension 1 sub-distribution $\Pi_1\subset\Pi$ such
that $[\Pi_1,\upsilon_{s-2}]\subset\Pi+\upsilon_{s-2}$. We assume, in addition, to
(\textsl{R\,}), that we have equality instead of the general inclusion.

Then there exists a codimension 1 sub-distribution $\Pi_2\subset\Pi_1$ such
that $[\Pi_2,\upsilon_{s-2}]\subset\Pi_1+\upsilon_{s-2}$. Again, we strengthen
this to be equality and continue. To summarize we arrive to the filtration
 $$
\Pi=\Pi_0\supset\Pi_1\supset\Pi_2\dots\supset\Pi_{n-2},
 $$
where $\op{rank}(\Pi_i)=n-i-1$ and $[\Pi_i,\upsilon_{s-2}]\subset\Pi_{i-1}+\upsilon_{s-2}$.
Our last condition, strengthening (\textsl{R\,}), is that the filtration rotates along $\upsilon_{s-2}$,
i.e.\ the last defining relation is the equality:
 $$
[\Pi_i,\upsilon_{s-2}]=\Pi_{i-1}+\upsilon_{s-2},
\leqno(R_+)
 $$
where we let $\Pi_{-1}=\CC_\E=\varpi^{-1}(\Delta)$ to include the condition (\textsl{R\,}).
Notice that there are no difference between (\textsl{R\,}) and (\textsl{R$_+$}) for $n=2$.

 \begin{dfn}
An overdetermined compatible system $\E$ of class $\oo=1$ is sufficiently non-degenerate
if it satisfies conditions (\textsl{N}), (\textsl{R$_+$}) and (\textsl{G}).
 \end{dfn}

Now we can prove two theorems from the Introduction.

\section{Proof of the main results}\label{S5}

Let us start by considering the case, when the base $B$ has dimension $n=2$,
so that the space $\Pi$ of Cauchy characteristics is 1-dimensional.

It is obvious that an external symmetry of $\E$ induces an internal
symmetry of the reduction $(M,\Delta)$. We have to show that the inverse exists and is unique.

Since $\Pi$ is the space of Cauchy characteristics, we have $(\E,\CC_\E)\simeq(M,\Delta)\times(\Pi,\Pi)$.
This local diffeomorphism does not show the contact structure.
The latter can be uncovered due to condition (\textsl{R\,}) as follows.

By (\ref{dag}) we can locally identify the space $(\E,\upsilon_{s-2}+\Pi)$ with
$P_{s-2}=\{V^{s-2}:\square_{s-3}\subset V^{s-2}\subset\nabla_{s-2}\}$.
Since every $(s-2)$-dimensional space $V^{s-2}\in P_{s-2}$ is squeezed between the subspaces
of $TM$ of dimensions $(s-3)$
and $(s-1)$, this $P_{s-2}$ is fibered over $M$ with fibers of dimension 1, and the
condition (\textsl{R\,}) means that the line field $V^{s-2}/\square_{s-3}$ rotates along the fiber,
which can be locally identified with $\Pi$.

Indeed, the above construction corresponds to the prolongation of rank 2 distributions
as follows. Recall that for a rank 2 distribution $\bar\Delta$ on $\bar M$ its prolongation
$\hat M$ is the manifold of all 1-dimensional subspaces
$\ell\subset\bar\Delta$, with the natural projection $\rho:\hat M\to\bar M$.
The fiber of $\rho$ over $\bar x\in\bar M$ is $\mathbb{P}\Delta_{\bar x}\simeq S^1$ and
the natural lift of the distribution is given by the formula
$\hat\Delta_\ell=d_\ell\rho^{-1}(\ell)$. This is a rank 2 distribution on $\hat M$ with
the derived rank 3 distribution equal to $[\hat\Delta,\hat\Delta]=d\rho^{-1}(\bar\Delta)$.
The space of Cauchy characteristics of the latter is the fiber $TS^1$.

Now letting $(\bar M,\bar\Delta)=(M,\nabla_{s-2})/\square_{s-3}=(\E,\hat\nabla_{s-2})/\hat\square_{s-3}$
(this is a local construction, quotient by $\square_{s-3}$ is possible as it is the space of
Cauchy characteristics), we can locally identify the distribution $\upsilon_{s-2}+\Pi$ on $\E$
with the distribution $\hat\Delta\times\square_{s-3}$ on $\hat M\times\R^{s-3}$.

This construction allows us to uniquely (locally) recover the pair $(\hat\nabla_{s-2},\upsilon_{s-2})$
on $\E$ from the internal geometry of the distribution $\Delta$ on $M$. Since, by condition (\textsl{G\,}),
the pair $(\hat\nabla_{s-2},\upsilon_{s-2})$ generates the contact distribution on the space of jets,
the symmetries of $(M,\Delta)$ are bijective with external Lie infinitesimal transformations of $\E$.

Consider now the case $n>2$. By definition the bracket with $\Pi_1$ acts trivially on the
distribution $\hat\nabla_{s-2}/\hat\square_{s-3}$.
The previous construction identifies the line bundle $\Pi/\Pi_1$ with the
projectivization of the rank 2 distribution $\bar\Delta=\hat\nabla_{s-2}/\hat\square_{s-3}$.
Thus the distribution $\bar\Delta+\Pi/\Pi_1$ can be identified with the
square $[\hat\Delta,\hat\Delta]$ of the prolonged distribution $\hat\Delta$.
This latter is identified with $\upsilon_{s-2}/\square_{s-3}+\Pi/\Pi_1$, with
$\Pi/\Pi_1$ corresponding to its vertical line subbundle.

By condition (\textsl{R$_+$}) this line bundle rotates along $\Pi_1/\Pi_2$ and does not
rotate along $\Pi_2$. Thus we can identify the line bundle $\Pi_1/\Pi_2$ with the prolongation
of $\hat\Delta$. Continuing in the same way we identify $\Pi$ with $(n-1)$-st iterated prolongation
of the rank 2 distribution $\bar\Delta$ over $M$, and so we uniquely recover the contact
geometry of $\E$ from the internal geometry of $(M,\Delta)$.

This proves Theorem \ref{THM}; Theorem \ref{THM2} follows shortly.

\section{Application I}\label{S6}

Consider the following overdetermined system $\E_k\subset J^k(\R^2)$:
 $$
\E_k:\quad\Bigl\{u_{k-i,i}=\frac{\lambda^{i+1}}{i+1}:0\le i\le k\Bigr\}
 $$
(we use jet multi-index notations $u_{10}=u_x$, $u_{01}=u_y$, $u_{20}=u_{xx}$ etc;
parameter $\lambda$ is to be excluded).

For $k=1$ this is the equation $2u_y=u_x^2$, which is equivalent via a potential change of variables
to the equation of gas dynamics $u_y=u\,u_x$. Both have infinite-dimensional contact symmetry algebra
$\mathfrak{cont}(\R^3)$.

For $k=2$ this is the celebrated involutive second order PDE system considered by E.Cartain in 1893 and
1910 \cite{C$_1$, C$_2$}:
 \begin{equation}\label{Ceq}
\Bigl\{u_{xx}=\l,\ u_{xy}=\frac{\l^2}2,\ u_{yy}=\frac{\l^3}3\Bigr\}.
 \end{equation}
He has shown that the contact symmetry group is
the (split) exceptional Lie group $G_2$.

 \begin{theorem}\label{gCE}
The algebra of contact symmetries of the equation $\E_k$ for $k>2$ has dimension $k(k+1)/2+6$ and
is isomorphic to the semi-direct product $\mathfrak{n}_{k+1}\rtimes\op{gl}_2$, where
$\mathfrak{n}_k$ is a nilpotent Lie algebra of length $k$.
 \end{theorem}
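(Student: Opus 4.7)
The plan is to apply Theorem~\ref{THM}, reducing the problem to computation of the symmetry algebra of the reduction $(M,\Delta)$, and then to determine this algebra via Tanaka prolongation theory.

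First, I verify that $\E_k$ is sufficiently non-degenerate for $k>2$. The characteristic covector is $dx+\lambda\,dy$ with multiplicity $k$, so $\E_k$ is of Lie class $\oo=1$. Nonlinearity (\textsl{N}) holds because the $\lambda$-parametrization produces a genuinely curved family of order-$k$ jets; rotation (\textsl{R}) holds because different values of $\lambda$ select different order-$(k-1)$ directions, so $\upsilon_{s-2}$ rotates along $\Pi$; and (\textsl{G}) is verified by iterated bracketing with $\upsilon_{s-2}$, which recovers the ambient Cartan distribution on $J^k$. Taking adapted coordinates $(x,y,u_{ij})_{i+j<k},\lambda$ on $\E_k$, the Cauchy characteristic has the form $\xi=\D_y-\lambda\,\D_x+\varrho\,\p_\lambda$ for a function $\varrho$ obtained by pulling back the defining relations; the transversal $y=\mathrm{const}$ gives the reduction with $\dim M=k(k+1)/2+2$ and $\Delta=\langle\D_x|_\Sigma,\p_\lambda\rangle$.

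Next, I compute the strong derived flag of $\Delta$. The bracket $[\D_x|_\Sigma,\p_\lambda]$ produces a direction of type $\p_{u_{k-1,0}}$, and further iterated brackets expose successively lower-order jet directions, with $\p_\lambda$-brackets introducing the extra generators dictated by the symmetric-power pattern of the exponents $\lambda^{i+1}/(i+1)$. The resulting strong growth vector is $(2,1,2,3,\ldots,k)$ of length $k+1$, so the Tanaka symbol $\mathfrak{m}=\bigoplus_{i=-k-1}^{-1}\mathfrak{g}_i$ is a graded nilpotent Lie algebra of total dimension $k(k+1)/2+2$; the pieces $\mathfrak{g}_{-j}$ for $j\ge3$ are identified with subquotients of symmetric powers of the natural $\op{gl}_2$-module $\mathfrak{g}_{-1}\simeq\R^2$. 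The universal Tanaka prolongation then gives $\mathfrak{g}_0=\op{gl}_2$ acting by grading-preserving derivations, and a Jacobi-identity obstruction at degree $1$ forces $\mathfrak{g}_{\ge 1}=0$. Setting $\mathfrak{n}_{k+1}:=\mathfrak{m}$ (of nilpotency length $k+1$), the full algebra is $\mathfrak{n}_{k+1}\rtimes\op{gl}_2$ of dimension $k(k+1)/2+6$, as claimed.

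The main obstacle is the prolongation computation: showing that $\mathfrak{g}_0$ is exactly $\op{gl}_2$ (neither a proper subalgebra nor a larger reductive extension) and that $\mathfrak{g}_{\ge 1}=0$. Both depend on the precise $\op{gl}_2$-module structure of the graded pieces of $\mathfrak{m}$, which is controlled by the specific exponents in the defining equations and must be checked uniformly in $k$. A secondary technical point is the verification of (\textsl{G}) for all $k>2$, which requires tracking the $\lambda$-rotation along the Cauchy characteristic through enough iterated brackets to recover $\CC_k$ on the $\E_k$-side.
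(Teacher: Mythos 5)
Your overall strategy---reduce by Theorem \ref{THM} and then invoke Tanaka theory---is indeed the paper's strategy, and your identification of the symbol of $\Delta$ as the truncated double-graded free algebra $\mathfrak{n}_{k+1}$ with growth vector $(2,1,2,3,\dots,k)$ is correct. But there is a genuine gap at the last step: you pass directly from ``the Tanaka prolongation of $\m$ is $\mathfrak{n}_{k+1}\oplus\op{gl}_2$'' to ``$\op{Sym}(\Delta)\simeq\mathfrak{n}_{k+1}\rtimes\op{gl}_2$''. The Tanaka prolongation only bounds the symmetry algebra from \emph{above} (this is exactly how the paper uses it, citing \cite{T,K$_1$}); equality holds only if the distribution is flat, i.e.\ locally equivalent to the standard left-invariant model on the nilpotent group of $\mathfrak{n}_{k+1}$, and nothing in your argument establishes flatness or any lower bound. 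That this is not a formality is demonstrated by the family $\mathcal{F}_k$ treated in the very same section of the paper: its reduction has the \emph{same} symbol $\mathfrak{n}_{k+1}$, hence the same Tanaka prolongation, yet its contact symmetry algebra has dimension $k(k+1)/2+4$, strictly smaller than $k(k+1)/2+6$. Your argument, applied verbatim to $\mathcal{F}_k$, would prove a false statement.

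The paper closes this gap by explicit construction: it exhibits generating functions of contact symmetries of $\E_k$ (the $x^iy^j$, $u_x$, $u_y$, two scalings, $y\,u_x+x^k/k!$ and $(k-1)yu-xy\,u_x-y^2u_y-x^{k+1}/(k+1)!$), and on the reduction side the vector fields $X$, $W_i^j$, $L$, $R$, $S_1$, $S_2$, $T$ together with their commutator table; verifying that these are symmetries gives the lower bound, and since its dimension matches the Tanaka upper bound, both the dimension count and the isomorphism type $\mathfrak{n}_{k+1}\rtimes\op{gl}_2$ follow (with Tanaka-flatness of $\Delta$ as a by-product, cf.\ the Remark after the theorem). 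To repair your proof you must either produce such an explicit basis of symmetries or prove flatness of $\Delta$ directly. Two smaller points: the Tanaka symbol is defined via the \emph{weak} derived flag, not the strong one (here they agree only because all brackets $[\g_{-i},\g_{-j}]$ with $i,j\ge2$ vanish in $\mathfrak{n}_{k+1}$, which deserves a remark); and the vanishing of the positive-degree prolongation, which you defer as a technical obstacle, is the real content of the paper's appendix---it genuinely depends on the truncation level, since for truncation at level $3$ (the case $k=2$, excluded by the hypothesis $k>2$) one has $\g_1\neq0$ and the answer is the $14$-dimensional $G_2$.
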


Every contact vector field on $J^1=J^1(\R^2)$ has the form (here $\D_x^1=\p_x+u_x\p_u$,
$\D_y^1=\p_y+u_y\p_u$):
 $$
X_f=-f_{u_x}\D_x^1-f_{u_y}\D_y^1+f\,\p_u+\D_x^1(f)\,\p_{u_x}+\D_y^1(f)\,\p_{u_y},
 $$
where $f\in C^\infty(J^1)$ is
the generating function (contact Hamiltonian). Let us indicate the generating functions $f$ of
a basis of $\op{Sym}(\E_k)$:
 \begin{multline*}
x^iy^j\ (i+j<k),\ \
u_x,\ u_y,\ u+y\cdot u_y,\ (k+1)u-x\,u_x,\\
y\cdot u_x+x^k/k!,\ (k-1)y\cdot u-xy\cdot u_x-y^2\cdot u_y-x^{k+1}/(k+1)!\,.
 \end{multline*}
Since the prolongation of the field $X_f$ to the space $J^k$ is given by the formula
($\z$ is the multi-index of the derivation, $\D_\z$ is the iterated total derivative and
$\D^k_i=\sum\limits_{|\z|\le k}u_{\z+1_i}\p_{u_\z}$ is the truncated total derivative \cite{KLV})
 $$
\hat X_f=-f_{u_x}\D_x^k-f_{u_y}\D_y^k+\sum_{|\sigma|\le k}\D_\z(f)\p_{u_\z},
 $$
we see that the first elements of the above collection act trivially on $u_{k-i,i}$. The same concerns
the translations $u_x,u_y$. The next elements in the first line are scalings and it is trivial to check
they preserve the system $\E_k$. The last elements (second line) of the above collection
have the following prolongations:
 \begin{multline*}
-y\,\p_x+\tfrac{x^k}{k!}\,\p_u+\tfrac{x^{k-1}}{(k-1)!}\,\p_{u_x}+u_x\,\p_{u_y}
+\tfrac{x^{k-2}}{(k-2)!}\,\p_{u_{xx}}+u_{xx}\,\p_{u_{xy}}+2u_{xy}\,\p_{u_{yy}}+\\
+\p_{u_{k,0}}+u_{k,0}\,\p_{u_{k-1,1}}+2u_{k-1,1}\,\p_{u_{k-2,2}}+\dots+ku_{1,k-1}\,\p_{u_{0,k}};
 \end{multline*}
 \begin{multline*}
xy\,\p_x+y^2\,\p_y+((k-1)yu-\tfrac{x^{k+1}}{(k+1)!})\,\p_u+\\
+((k-2)y u_x-\tfrac{x^k}{k!})\,\p_{u_x}+((k-1)u-xu_x+(k-3)yu_y)\,\p_{u_y}+\dots-\\
-(x+y u_{k,0})\,\p_{u_{k,0}}-\sum_{1\le i\le k}(i\,x\, u_{k-i+1,i-1}+(i+1)y\, u_{k-i,i})\,\p_{u_{k-i,i}}.
 \end{multline*}
Now it is straightforward to check these preserve the system $\E_k$.

Let us consider the reduction of $\E_k$ by the Cauchy characteristic, which is
the vector field $\D_y^k-\l\,\D_x^k+\varrho\,\p_\lambda$, where $\l=u_{k,0}$ and
the precise value of $\varrho$ is not important.

The quotient $(M,\Delta)$ is (locally) isomorphic to the intersection with the transversal
$\{y=\op{const}\}$, so that $\Delta=\langle\D_x^k,\p_\lambda\rangle$, where
 \begin{multline*}
\D_x^k=\p_x+u_{10}\,\p_u+u_{20}\,\p_{u_{10}}+u_{11}\,\p_{u_{01}}+\dots+\\
+\lambda\,\p_{u_{k-1,0}}+\tfrac12\lambda^2\,\p_{u_{k-2,1}}+\dots+\tfrac1k\lambda^k\,\p_{u_{0,k-1}}.
 \end{multline*}
Denoting $w_i^j=u_{ij}$, $0\le i+j\le k-1$, we see that the above rank 2 distribution
corresponds to the following underdetermined ODE system on $w=(w^0,\dots,w^{k-1})$ as a function of $x$
(we understand $w_i^j=\p_x^i(w^j)$):
 $$
\mathcal{Y}_k:\quad \bigl\{w_k^0=\lambda,\ w^1_{k-1}=\tfrac12\lambda^2,\ \dots,\ w_1^{k-1}=\tfrac1k\lambda^k
\bigr\}.
 $$
By Theorem \ref{THM} the algebra of external symmetries $\op{Sym}_{\mathfrak{cont}}(\E_k)$
is isomorphic to the algebra of internal symmetries of the above Monge equation
$\mathcal{Y}_k\subset J^{k,k-1,\dots,2,1}(\R,\R^k)$ (considered as a submanifold in the mixed-jet
space)\footnote{Notice that $\mathcal{Y}_2\subset J^{2,1}(\R,\R\times\R)$ is the Hilbert-Cartan
equation. In \cite{AK} it was generalized in some aspects, different from the present paper.},
which is identical with the algebra $\op{Sym}(\Delta)$.

The latter has the following basis (we write only the point part of the transformation,
from which it can be uniquely recovered by the prolongation):
 \begin{gather*}
X=\p_x,\quad
W_i^j=\tfrac1{i!}x^i\,\p_{w^j}\ (0\le i+j<k),\\
L=\tfrac1{k!}x^k\,\p_{w^0}+w^0_1\,\p_{w^1}+2w^1_1\,\p_{w^2}+\dots+(k-1)w^{k-2}_1\,\p_{w^{k-1}},\\
R=\tfrac1{(k+1)!}x^{k+1}\,\p_{w^0}+(xw^0_1-(k-1)w^0)\,\p_{w^1}+2(xw^1_1-(k-2)w^1)\,\p_{w^2}+\\
  \quad\quad+3(xw^2_1-(k-3)w^2)\,\p_{w^3}+\dots +(k-1)(xw^{k-2}_1-w^{k-2})\,\p_{w^{k-1}},\\
S_1=x\,\p_x+kw^0\,\p_{w^0}+(k-1)w^1\,\p_{w^1}+\dots+2w^{k-2}\,\p_{w^{k-2}}+w^{k-1}\,\p_{w^{k-1}},\\
S_2=w^0\,\p_{w^0}+2w^1\,\p_{w^1}+3w^2\,\p_{w^2}+\dots+kw^{k-1}\,\p_{w^{k-1}},\\
T=\l\,\p_x+(\l w^0_1-w^1)\,\p_{w^0}+(\l w^1_1-w^2)\,\p_{w^1}+\quad\qquad\\
  \qquad\qquad+\dots+(\l w^{k-2}_1-w^{k-1})\,\p_{w^{k-2}}
  +\tfrac1{k(k+1)}\l^{k+1}\,\p_{w^{k-1}}.
 \end{gather*}
Notice that $\langle  X,W_i^j,L\rangle$ is a translational part and $\langle R,S_1,S_2,T\rangle$
is the stabilizer of the origo (all coordinates vanish) $o\in M$ (to check this for $L$
one has to prolong the formulae to see the term $+\partial_\l$; recall that $\l=w^0_k$).
The non-trivial commutators are the following
 \begin{gather*}
[X,L]=W_{k-1}^0,\ [X,W_i^j]=W_{i-1}^j,\ [L,W_i^j]=-(j+1)W_{i-1}^{j+1}\ (i>0),\\
[X,R]=L,\ [W_i^j,R]=(j+1)(i+j+1-k)W_i^{j+1},\\
[W_i^j,S_1]=(k-i-j)W_i^j,\ [W_i^j,S_2]=(j+1)W_i^j,\ [L,S_2]=L,\\
[L,T]=X,\ [W_i^j,T]=-W_i^{j-1}\ (j>0),\ [R,T]=S_1-S_2,\\
[R,S_1]=-R,\ [R,S_2]=R,\ [T,S_1]=T,\ [T,S_2]=-T.
 \end{gather*}
This algebra is graded with the following weights: $w(X)=w(L)=-1$, $w(W_i^j)=i-k-1$,
$w(R)=w(T)=w(S_1)=w(S_2)=0$.

Even more, it is bi-graded: $b(X)=(-1,0)$, $b(L)=(0,-1)$, $b(W_i^j)=(i+j-k,-j-1)$,
$b(R)=(1,-1)$, $b(T)=(-1,1)$, $b(S_1)=(0,0)$, $b(S_2)=(0,0)$; the grading $w$ is the total
weight of $b$.

It is rather straightforward to check that the above fields are symmetries of the distribution
$\Delta$. This gives the lower bound for $\op{Sym}(\Delta)$. To get the upper bound
one should calculate the \emph{Tanaka algebra\/} of $\Delta$.

Recall \cite{T} that with every distribution one associates the sheaf of graded nilpotent Lie algebras
(called \emph{symbol\/} or \emph{Carnot algebras\/}) $\m=\oplus_{i<0}\g_i$, $\g_i=\Delta_{-i}/\Delta_{-i-1}$,
where $\{\Delta_j\}$ is the weak derived flag of $\Delta$. The point-wise bracket of $\m$
is induced by the commutator of vector fields, so that for every point $x\in M$ we get the Lie algebra
$\m=\m_x$.

The Tanaka prolongation $\hat\m=\oplus\g_i$ is defined as the maximal graded Lie
algebra with the given negative part $\m$ \cite{T}. It can also be defined via graded Lie algebra
cohomology as $\g_i=H^1_i(\m,\m\oplus\g_0\dots\oplus\g_{i-1})$ for $i\ge0$.
In other words, $\g_i$ are constructed successively as maximal subspaces such that all possible
Jacobi identities hold, see \cite{Y$_2$}.

To calculate $\m$ one takes the generators $\D_x^k,\p_\lambda$ of $\Delta$, and
computes all possible brackets. The resulting Carnot algebra is a graded nilpotent
Lie algebra $\mathfrak{n}_{k+1}$ isomorphic to $\langle X,L,W_i^j\rangle$.

It can be described as a truncated double-graded free Lie algebra with fundamental part
of grading $-1$ and rank $2$. In the appendix we demonstrate that the Tanaka prolongation of
$\mathfrak{n}_{k+1}$ is trivial in positive grading. Hence the Tanaka algebra is
$\mathfrak{n}_{k+1}\oplus\g_0$, where $\g_0=\op{gl}(\g_{-1})$.

By the results of \cite{T,K$_1$} and Theorem \ref{THM} this gives the upper bound for
the symmetries of both equations: $\E_k$ (PDE system) and $\mathcal{Y}_k$ (ODE system).
Since it coincides with the algebra of symmetries we already constructed,
our description of $\op{Sym}(\E_k)\simeq\op{Sym}(\Delta)$ is complete.
Theorem \ref{gCE} is proved.

 \begin{rk}
We have demonstrated that $\Delta$ is the most symmetric distribution with the
symbol $\m$ equal to the truncated double-graded free Lie algebra $\mathfrak{n}_{k+1}$.
By Theorem 4 of \cite{AK} this implies that $\Delta$ is Tanaka-flat.
 \end{rk}

We can describe very symmetric systems, for which the corresponding distributions are non-flat.
Let us start with the PDE models $\mathcal{F}_k\subset J^k(\R^2)$:
 $$
\mathcal{F}_k=\Bigl\{u_{k-i,i}=\frac{\lambda^{im+1}}{im+1}:0\le i\le k\Bigr\},
 $$
which coincides with $\E_k$ for $m=1$. For $k=2$ this is (equivalent to) the family
of involutive 2nd order PDE systems
 $$
\Bigl\{u_{xx}=\l,\ u_{xy}=\frac{\l^{m+1}}{m+1},\ u_{yy}=\frac{\l^{2m+1}}{2m+1}\Bigr\}
 $$
with 7-dimensional contact symmetry algebra described
by Cartan in \cite{C$_2$} (the next large after 14-dimensional $G_2$), see also \cite{K$_4$}.

By the calculation similar to the above we get the following
 \begin{theorem}
The algebra of contact symmetries of the equation $\mathcal{F}_k$ for $k>2$ and generic $m$
has dimension $k(k+1)/2+4$. Its basis consists of contact vector fields $X_f$ with
generating functions $f$ as follows
 $$
x^iy^j\ (0\le i+j<k),\ u_x,\ u_y,\ (km+1)\,u-m\,x\,u_x,\ u+m\,y\,u_y.
 $$
 \end{theorem}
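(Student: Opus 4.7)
The strategy parallels the proof of Theorem \ref{gCE}. First I would verify by direct prolongation that the five families of generating functions listed define contact vector fields preserving $\mathcal{F}_k$; this provides the lower bound $\dim\op{Sym}(\mathcal{F}_k)\ge k(k+1)/2+4$. The monomials $x^iy^j$ with $i+j<k$ and the translations $u_x,u_y$ preserve the system trivially, as their prolongations act as zero on the top-order jet coordinates $u_{k-i,i}$ and on $\l$. For the two remaining vector fields, I would prolong them to $J^k$ and check that each relation $u_{k-i,i}=\l^{im+1}/(im+1)$ is preserved by compensating a scaling of the left-hand side with an appropriate rescaling of $\l$; the coefficients $(km+1,-m)$ and $(1,m)$ are precisely tuned to make these rescalings consistent across all $k+1$ equations simultaneously.

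Next I would invoke Theorem \ref{THM}. The Cauchy characteristic of $\CC_{\mathcal{F}_k}$ is $\D_y^k-\l^m\,\D_x^k+\varrho\,\p_\l$, read off from the ratio $u_{k-1,1}/u_{k,0}=\l^m$ of successive jet coordinates. Reducing via the transversal $\{y=\op{const}\}$ produces the rank $2$ distribution $\Delta=\langle\D_x^k,\p_\l\rangle$ on $M$, where $\D_x^k$ incorporates the substitutions $u_{k-i,i}=\l^{im+1}/(im+1)$. One must verify sufficient non-degeneracy for $k>2$ and generic $m$: condition (\textsl{N}) follows from genuine nonlinearity in $\l$; condition (\textsl{R}) (equivalent to (\textsl{R$_+$}) in our case $n=2$) amounts to the rotation of $\upsilon_{s-2}$ along $\Pi$, which follows from nonvanishing of the second derivatives $\p_\l^2(\l^{im+1}/(im+1))$ for $i\ge1$; and (\textsl{G}) follows as in the $m=1$ setting since iterated brackets with $\upsilon_{s-2}$ recover the ambient contact distribution on $J^k$.

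The upper bound then comes from Tanaka theory applied to $(M,\Delta)$. I would compute the Carnot symbol $\m$ by iterated brackets of $\D_x^k$ and $\p_\l$; as a graded vector space the nilpotent part agrees with the $m=1$ case (contributing $k(k+1)/2+2$ to the total dimension), but its structure constants carry $m$-dependent coefficients. The non-negative prolongation is then controlled by simultaneous compatibility with the exponents $im+1$ for $0\le i\le k$. This forces $\g_0$ to collapse from $\op{gl}_2$ (the enhanced $m=1$ case) to the $2$-dimensional abelian subalgebra spanned by the ``anisotropic'' scalings corresponding to $(km+1)u-m\,x\,u_x$ and $u+m\,y\,u_y$. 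Vanishing of $\g_i$ for $i\ge1$ follows by the same cohomological argument as in Theorem \ref{gCE}, using that the relevant $H^1_i(\m,\m\oplus\g_0\oplus\dots\oplus\g_{i-1})$ vanishes. Combining the matching bounds and invoking Theorem \ref{THM} to identify $\op{Sym}(\mathcal{F}_k)\simeq\op{Sym}(M,\Delta)$ completes the proof.

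The main obstacle is the precise genericity analysis for $m$: one must identify the discrete set of exceptional values admitting extra grade-zero derivations or non-trivial positive prolongations. This reduces to a finite collection of resonance conditions among $\{im+1:0\le i\le k\}$ imposed by Jacobi identities on prospective prolongation elements. The value $m=1$ is the most symmetric exceptional case, but almost certainly not the only one; ``generic $m$'' in the statement should be read as the complement of this exceptional discrete set. One should also verify that condition (\textsl{R}) still holds near the boundary of the generic locus, so the non-degeneracy hypothesis of Theorem \ref{THM} is not violated when $m$ approaches these exceptional values.
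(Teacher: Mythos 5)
Your lower bound and your use of Theorem \ref{THM} follow the paper's route exactly: the paper proves this statement ``by the calculation similar to'' Theorem \ref{gCE}, i.e.\ direct verification that the listed generating functions preserve $\mathcal{F}_k$, reduction by the Cauchy characteristic $\D_y^k-\l^m\,\D_x^k+\varrho\,\p_\l$ to the underdetermined ODE system $w_k^0=\l$, $w^1_{k-1}=\tfrac1{m+1}\l^{m+1},\dots$, and identification of the contact symmetries of $\mathcal{F}_k$ with the symmetries of the reduced rank $2$ distribution $\Delta$. That part of your proposal is sound (one small slip: the characteristic direction is read off from the ratio $\p_\l u_{k-1,1}/\p_\l u_{k,0}=\l^m$ of the $\l$-derivatives, not of the jet coordinates themselves, whose ratio is $\l^m/(m+1)$).

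The upper bound, however, contains a genuine error. The Tanaka prolongation is an invariant of the Carnot symbol $\m$ alone, and the symbol of $\Delta_{\mathcal{F}_k}$ at a generic point is isomorphic to the \emph{same} truncated double-graded free algebra $\mathfrak{n}_{k+1}$ as in the case $m=1$: all iterated brackets of $\D_x^k$ and $\p_\l$ of depth $\ge 2$ are vertical fields with coefficients depending only on $\l$, hence mutually commuting, so the symbol is metabelian, generated by $\g_{-1}$, with graded dimensions $2,1,2,\dots,k$ for generic $m$; the $m$-dependent coefficients only change the choice of graded basis and can be normalized away. Consequently $\g_0=\op{gl}(\g_{-1})$ and $\g_i=0$ for $i\ge1$ exactly as in the appendix: the prolongation does \emph{not} collapse, and Tanaka theory by itself only yields $\dim\op{Sym}(\Delta)\le k(k+1)/2+6$, which is not the bound you need. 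What collapses is the symmetry algebra of this particular \emph{non-flat} realization of the symbol: as the paper records, the associated graded of $\op{Sym}(\Delta)$ is the proper subalgebra $\mathfrak{n}_{k+1}\rtimes\R^2\subset\hat{\mathfrak{n}}_{k+1}$, where $\R^2$ is the diagonal part of $\g_0$, and $\Delta$ is sub-maximally symmetric \emph{with Tanaka algebra still equal to} $\hat{\mathfrak{n}}_{k+1}$. So to close your argument you must either prove that $\Delta$ is non-flat for generic $m$ and invoke the gap-phenomenon bound (the methods of \cite{K$_4$}) to exclude dimensions $k(k+1)/2+5$ and $k(k+1)/2+6$, or solve the Lie equations for symmetries of $\Delta$ directly; your ``resonance conditions'' among the exponents $im+1$ belong to that curvature/Lie-equation analysis, not to a cohomological vanishing of positive Tanaka prolongations, which is already vacuous here.
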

By Theorem \ref{THM} we can also represent this algebra as the internal symmetry of the
following underdetermined ODE system obtained from $\mathcal{F}_k$ via reduction by the
Cauchy characteristic:
 $$
w_k^0=\lambda,\ w^1_{k-1}=\tfrac1{m+1}\lambda^{m+1},\ \dots,\ w_1^{k-1}=\tfrac1{(k-1)m+1}\lambda^{(k-1)m+1}.
 $$
(here $w^j=w^j(x)$ are the dependent variables and $w^j_i=\p_x^i(w^j)$).

The symmetry algebra has a filtration with the corresponding graded Lie algebra being
the semi-direct product $\mathfrak{n}_{k+1}\rtimes\R^2$, where
$\R^2$ is the diagonal part of $\g_0$. This associated graded  algebra
is a subalgebra of the Tanaka algebra $\hat{\mathfrak{n}}_{k+1}$ of the flat model.

It is possible to show by the methods of \cite{K$_4$} that the corresponding distribution
$\Delta$ is sub-maximal symmetric with the Tanaka algebra $\hat{\mathfrak{n}}_{k+1}$.

\section{Application II}\label{S7}

The PDE system $\E_k$ has solution space $\op{Sol}(\E_k)$ that is parametrized by 1 function
of 1 argument and $\dim M-2=\frac{k(k+1)}2$ constants (these are the so-called Lie class $\omega=1$
systems \cite{L,K$_2$,K$_3$}).

More general systems can be treated with the proposed technique too.
Consider, for instance, the following system $\mathcal{R}_k^m\subset J^k(\R^2)$, $m<k$, for which the
right-hand side is the $m$-th tangent cone (of $\op{dim}=m+1$ in $\R^{k+1}$)\footnote{This
$\R^{k+1}(u_{k0},\dots,u_{0k})=S^k(\R^2)^*$ is the fiber of $\pi_{k,k-1}:J^k(\R^2)\to J^{k-1}(\R^2)$.}
of the normal projective curve defining $\E_k=\mathcal{R}_k^0$:
 $$
\mathcal{R}_k^m=\Bigl\{u_{k-i,i}=\frac{\l^{i+1}}{i+1}+\sum_{j=1}^{\op{min}(m,i+1)}
\frac{i!}{(i+1-j)!}\l^{i+1-j}\zeta_j : 0\le i\le k\Bigr\}
 $$
(the PDE system is obtained by excluding the additional parameters $\l,\zeta_1,\dots,\zeta_m$
and obtaining relations on the jet-variables $u_{k-i,i}$).

For instance for $k=3$, $m=2$ the system $\mathcal{R}_3^2$ looks so
(this is one highly nonlinear PDE of the 3rd order)
 \begin{multline*}
u_{xxx}=\l+\zeta_1,\ u_{xxy}=\frac{\l^2}2+\l\zeta_1+\zeta_2,\\
u_{xyy}=\frac{\l^3}3+\l^2\zeta_1+2\l\zeta_2,\ u_{yyy}=\frac{\l^4}4+\l^3\zeta_1+3\l^2\zeta_2.
 \end{multline*}
The system $\mathcal{R}_2^1$ is the famous Goursat parabolic PDE on the plane; its
contact symmetry group is $G_2$ the same as for (\ref{Ceq}).

The system $\mathcal{R}_k^m$ is involutive and its characteristic variety (both complex and real)
consists of 1 point with multiplicity $m+1$.
The solution space $\op{Sol}(\mathcal{R}_k^m)$
is parametrized by $\oo=m+1$ functions of 1 argument (and some constants).
There is a reduction of this parabolic system via a characteristic involutive distribution
(no longer a space of Cauchy characteristics) to a distribution $\tilde\Delta$ on the quotient
that de-prolongs to a rank 2 distribution $\Delta$ (Monge system);
this generalizes \cite{G}.

This reduction makes a bijection between contact symmetries of
the PDE system and the ordinary symmetries of $\Delta$.
While this idea is more general, we will indicate only how it applies to our system
$\mathcal{R}_k^m$.

 \begin{theorem}\label{GcE}
The algebra of contact symmetries of the equation $\mathcal{R}_k^m$ for $k>2$, $0<m<k$,
has dimension $k(k+1)/2+6$ and is isomorphic to the same Lie algebra
$\mathfrak{n}_{k+1}\rtimes\op{gl}_2$ as in Theorem \ref{gCE}.
 \end{theorem}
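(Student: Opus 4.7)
The strategy is to reduce to Theorem \ref{gCE} by showing that the reduction procedure alluded to just above the statement produces, when applied to $\mathcal{R}_k^m$, the very same rank $2$ distribution $\Delta$ that arose in the proof of Theorem \ref{gCE} from $\E_k=\mathcal{R}_k^0$, and that contact symmetries of $\mathcal{R}_k^m$ are still in bijection with symmetries of this reduction.

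First, I would identify the characteristic involutive distribution $\Xi\subset\CC_{\mathcal{R}_k^m}$. Since the characteristic variety consists of a single real point of multiplicity $m+1$, the geometric reduction is not by Cauchy characteristics alone: $\Xi$ has rank $m+2$, containing the direction $\D_y-\l\,\D_x+\varrho\,\p_\l$ (which is no longer Cauchy for $m>0$) together with lifts of the fiber directions $\p_{\zeta_1},\dots,\p_{\zeta_m}$ coming from the multiplicity parameters. The involutivity of $\Xi$ follows from the tangent-cone form of the right-hand side: the $\zeta_j$ enter with coefficients that are divided $\l$-differences of $\tfrac{\l^{i+1}}{i+1}$, so successive brackets stay inside $\Xi$.

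Second, I would form the quotient $\tilde M=\mathcal{R}_k^m/\Xi$ with its induced distribution $\tilde\Delta$, then deprolong. The distribution $\tilde\Delta$ has rank $>2$, but admits a chain of Cauchy characteristics in its weak derived flag; successively quotienting these produces $m$ deprolongation steps and lands on a rank $2$ distribution $\Delta$ on a manifold of dimension $k(k+1)/2+2$. A coordinate computation, using only the combinatorics of the tangent-cone parametrization, identifies $(M,\Delta)$ with the reduction $(M_0,\Delta_0)$ constructed for $\E_k$ in the proof of Theorem \ref{gCE}. Combined with Theorem \ref{gCE}, this immediately gives the symmetry algebra $\op{Sym}(\Delta)\simeq\mathfrak{n}_{k+1}\rtimes\op{gl}_2$ of the expected dimension $k(k+1)/2+6$.

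Third, I would establish the bijection between contact symmetries of $\mathcal{R}_k^m$ and internal symmetries of $(M,\Delta)$. The argument mimics Section \ref{S5}: one checks that $\mathcal{R}_k^m$ satisfies appropriate versions of (\textsl{N}), (\textsl{R$_+$}), (\textsl{G}) with $\Xi$ in place of $\Pi$, and then reconstructs the contact distribution on $J^k$ fiber-by-fiber from the intrinsic data on $(M,\Delta)$, exactly as in Theorem \ref{THM}. The main obstacle is here: Theorem \ref{THM} is stated for genuine class $\oo=1$ systems, whereas $\mathcal{R}_k^m$ has class $\oo=m+1$, so the part of the Section \ref{S5} argument that uses the Cauchy nature of $\Pi$ — in particular the identification of $(\upsilon_{s-2}+\Pi)/\square_{s-3}$ with a prolongation $\hat\Delta$ — must be replaced by the analogous statement for the larger distribution $\Xi$. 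This forces one to introduce an internal filtration of $\Xi$ by codimension-$1$ subdistributions $\Xi\supset\Xi_1\supset\dots\supset\Xi_m$, each rotating along the previous, parallel to the filtration $\Pi\supset\Pi_1\supset\dots$ of Section \ref{S4}, and to check that the tangent-cone structure supplies exactly the non-degeneracy needed at each step. Once this is in place, invoking Theorem \ref{gCE} finishes the proof.
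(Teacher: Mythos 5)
The decisive gap is in your third step. You propose to obtain the bijection between contact symmetries of $\mathcal{R}_k^m$ and $\op{Sym}(M,\Delta)$ by re-running the reconstruction of Section~\ref{S5} with $\Xi$ in place of $\Pi$, i.e.\ by proving a version of Theorem~\ref{THM} for parabolic systems of Lie class $\oo=m+1$. That is exactly where the difficulty is concentrated, and your sketch defers it rather than supplies it: every structural ingredient of Section~\ref{S5} --- the splitting $(\E,\CC_\E)\simeq(M,\Delta)\times(\Pi,\Pi)$, the identification of $\E$ with the fibration $P_{s-2}$ over $M$, the rotation conditions (\textsl{R}), (\textsl{R$_+$}) --- rests on $\Pi$ being a space of Cauchy characteristics of $\CC_\E$, which your $\Xi$ is not (as you note yourself). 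The filtration $\Xi\supset\Xi_1\supset\dots\supset\Xi_m$ is postulated, not constructed, and no argument is given that it reconstructs the contact distribution from $(M,\Delta)$. The paper never proves such a generalization; it sidesteps it by an idea absent from your proposal: Theorem~\ref{THM} (through Theorem~\ref{gCE}) is applied only to the class $\oo=1$ system $\E_k=\mathcal{R}_k^0$, whose reduction is the very same $\Delta=\langle\bar\D_x,\p_\l\rangle$. A symmetry of $\Delta$ therefore lifts uniquely to a contact symmetry of $\E_k$; and every contact transformation preserving $\E_k$ automatically preserves $\mathcal{R}_k^m$, because $\mathcal{R}_k^m$ is, fiberwise over $J^{k-1}$, the $m$-th tangent cone of the curve in the fibers of $\pi_{k,k-1}$ that defines $\E_k$, and tangent cones are natural under such transformations. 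This single naturality observation replaces your entire third step, and no non-degeneracy analysis for class $\oo>1$ is ever needed.

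Two further corrections to your first two steps, which otherwise do agree with the paper's reduction. First, your $\Xi$ has rank $m+1$, not $m+2$, and as written it is not involutive: already for $k=3$, $m=2$ one computes $[\p_{\zeta_2},\D_y-\l\D_x]=\p_r+\l\,\p_s+\l^2\p_t=\eta_1$, which lies neither in $\Xi$ nor in $\CC$; involutivity requires replacing $\D_y-\l\D_x$ by $\bar\D_y-\l\bar\D_x$ (the total derivatives evaluated at $\zeta_1=\dots=\zeta_m=0$), i.e.\ correcting the characteristic direction by $\zeta$-dependent multiples of the fields $\eta_i$. Second, since $\Xi$ is not Cauchy for $\CC$, the ``quotient of $\CC$ by $\Xi$'' is undefined until one saturates: the paper first enlarges $\CC$ to $\op{ad}_\Pi^\infty(\CC)=\CC+\langle\eta_1,\dots,\eta_m\rangle$, quotients by the integrable $\Pi^m=\langle\p_{\zeta_1},\dots,\p_{\zeta_m}\rangle$, and only then by $\xi=\bar\D_y-\l\bar\D_x$, which \emph{is} a genuine Cauchy characteristic of $\tilde\Delta$. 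Moreover, in place of your $m$ ``deprolongation steps'' the paper observes that the reduced distribution $\Delta_+$ is the $m$-th derived distribution of the rank~2 distribution $\Delta$ on the \emph{same} manifold $M$; deprolongation in the sense of Section~\ref{S3} would lower the dimension, which your own count $\dim M=k(k+1)/2+2$ shows does not happen.
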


 \begin{proof}
Let us first show how to associate a rank 2 distribution to such a parabolic system of PDEs.

Consider the Cartan distribution $\CC$ of $\mathcal{R}_k^m$. It is generated by two truncated
total derivatives $\D_x^k,\D_y^k$ and the vertical fields $\p_\l,\p_{\zeta_1},\dots,\p_{\zeta_m}$.
The sub-distribution $\Pi^m=\langle\p_{\zeta_1},\dots,\p_{\zeta_m}\rangle$ is integrable
and we would like to quotient by it.

It however does not commute with $\CC$, so we need to add the commutators
 $$
\op{ad}_\Pi^\infty(\CC)=\CC+\langle\eta_1,\dots,\eta_m\rangle,
 $$
where $\eta_1=\sum_{i=0}^k\l^i\p_{u_{k-i,i}}$, $\eta_{1+p}=\p_\l^p(\eta_1)$.

The quotient manifold is $\tilde M=\R^{k(k+1)/2+3}(x,y,\l,u_\z:|\z|<k)$ and it is equipped with the
distribution $\tilde\Delta=\op{ad}_\Pi^\infty(\CC)/\Pi$ generated by the truncated total
derivatives evaluated at $\zeta_1=\dots=\zeta_m=0$, denoted $\bar\D_x^k$, $\bar\D_y^k$, and
by the fields $\eta_1,\dots,\eta_m$, $\p_\l$.
For instance, for $k=3,m=2$
 \begin{gather*}
\bar\D_x=\p_x+p\,\p_u+r\,\p_p+s\,\p_q+\l\,\p_r+\tfrac12\l^2\p_s+\tfrac13\l^3\p_t,\ \\
\bar\D_y=\p_y+q\,\p_u+s\,\p_p+t\,\p_q+\tfrac12\l^2\p_r+\tfrac13\l^3\p_s+\tfrac14\l^4\p_t,\\
\eta_1=\p_r+\l\,\p_s+\l^2\p_t,\qquad \eta_2=\p_s+2\l\,\p_t
 \end{gather*}

The vector $\xi=\bar\D_y-\l\bar\D_x$ is the Cauchy
characteristic of the distribution $\tilde\Delta$.
Let $(M,\Delta_+)=(\tilde M,\tilde\Delta)/\xi$ be the
local quotient. It is easy to see that $\Delta_+$ is the $m$-th derived
(both weak and strong) distribution of the rank 2 distribution
$\Delta=\langle\bar\D_x,\p_\l\rangle$.

Thus any contact symmetry of $\CC$ descends to an ordinary symmetry of $\Delta$.
Conversely, $\op{Sym}(\Delta)=\op{Sym}(\Delta_+)$ because prolongation
preserves the symmetries, and then by Theorem \ref{THM} the latter coincide with the contact
symmetries of $\tilde\Delta$. This distribution corresponds to the equation $\E_k=\mathcal{R}_k^0$,
from which by taking the $m$-th tangent cone we obtain our system $\mathcal{R}_k^m$.
Thus any ordinary symmetry of $\Delta$ uniquely lifts to a contact symmetry of $\CC$.

The claim now follows from Theorem \ref{gCE} on the symmetries of $\E_k$.
 \end{proof}

\section{The case of $n>2$ independent variables}\label{S8}

It is easy to see that the generalized Lie-B\"acklund theorem, as it was stated in Introduction,
fails if we allow equations of the first order without non-degeneracy assumptions.
Indeed, this is so with any class $\oo=1$ system of the type $2E_2+E_1$ in $n>2$ independent variables.

For instance, we can take (\ref{Ceq}) as $2E_2$ and trivial $E_1$:
 $$
u_{xx}=\l,\ u_{xy}=\frac{\l^2}2,\ u_{yy}=\frac{\l^3}3,\ u_z=0.
 $$
Then $z\mapsto Z(x,y,z,u,u_x,u_y,u_z)$ is a contact symmetry, inducing the trivial
transformation of the reduction $(M^5,\Delta_\text{HC})$, which is the Hilbert-Cartan equation.
In fact, the general contact symmetry $X_f$ has generating function $f=f_0+u_z\cdot \tilde f$,
where $f_0=f_0(x,y,u,u_x,u_y)$ is the 14-parametric generating function corresponding to
$G_2$-action on (\ref{Ceq}), and $\tilde f$ is an arbitrary function on $J^1(\R^3)$.
Thus the generalized Lie-B\"acklund theorem fails: the map (\ref{LBT}) is not injective.

Let's consider a similar system of the second order:
 $$
u_{xx}=\l,\ u_{xy}=\frac{\l^2}2,\ u_{yy}=\frac{\l^3}3,\ u_{xz}=0,\ u_{yz}=0,\ u_{zz}=0.
 $$
Its reduction (along rank 2 distribution $\Pi$)
is the following rank 2 distribution in 6D:
$(\bar M,\bar\Delta)=(M^5,\Delta_\text{HC})\times(\R,0)$.
Let $t$ be the first integral of the distribution $\bar\Delta$ (it is not completely non-holonomic).
The symmetries are $t\mapsto T(t)$ and $G_2$-transformations of the first factor, with
coefficients parametrized as functions of $t$. Thus in total
$\op{Sym}(\bar M,\bar\Delta)$ is parametrized by 15 functions of 1 variable.

A direct calculation shows that the contact algebra is precisely the same:
$(\op{Lie}(G_2)\oplus\R)^\R$, so the generalized Lie-B\"acklund theorem holds.

Similarly this theorem holds for higher order equations. For instance,
the system of type $3E_3+3E_2$
 $$
u_{xxx}=\l,\ u_{xxy}=\frac{\l^2}2,\ u_{xyy}=\frac{\l^3}3,\ u_{yyy}=\frac{\l^4}4,\
u_{xz}=0,\ u_{yz}=0,\ u_{zz}=0
 $$
reduces to $(\bar M^9,\bar\Delta^2)=(M_1,\Delta_1)\times(\R,0)$, and both
contact symmetry of the PDE system and the internal symmetry of the reduction are
parametrized by 13 functions of 1 variable. More precisely, these algebras are both
equal to $(\g\oplus\R)^\R$, where $\g$ is the algebra of symmetries of $(M_1,\Delta_1)$
equivalent to the Monge system of 2 equations on 3 unknowns (the indices denote the number
of $x$-derivatives) studied in Section \ref{S6}:
 $$
v_3=\lambda,\ w_2=\tfrac12\lambda^2,\ z_1=\tfrac13\lambda^3.
 $$

On the other hand if we consider the system of the 3rd order $9E_3$
 \begin{multline*}
u_{xxx}=\l,\ u_{xxy}=\frac{\l^2}2,\ u_{xyy}=\frac{\l^3}3,\ u_{yyy}=\frac{\l^4}4,\\
u_{xxz}=0,\ u_{xyz}=0,\ u_{yyz}=0,\ u_{xzz}=0,\ u_{yzz}=0,\ u_{zzz}=0,
 \end{multline*}
its contact symmetry algebra is 21-dimensional, while the internal symmetry algebra of the
reduction (the distribution there is not completely holonomic) is infinite-dimensional.
Thus the generalized Lie-B\"acklund theorem fails: the map (\ref{LBT})
is not surjective.

All these systems have a kind of degeneracy, so let us consider a totally non-linear
(and non-degenerate) system, for which our Theorems guarantee that the generalized
Lie-B\"acklund theorem holds.

We start with the equations of pure order 2 and Lie class $\oo=1$, for which
the result, that contact external symmetries of the PDE system
and the internal symmetries of the reduction coincide, follows also from \cite{Y$_1$}.
Consider the following example.
 $$
\E=\Bigl\{u_{ij}=\frac{\l^{m_i+m_j+1}}{m_i+m_j+1}:1\le i\le j\le n\Bigr\}.
 $$
This system is involutive and, if $m_i\ne m_j$ for $i\ne j$, it has no first integrals.
We can always achieve $m_1=0$ by re-parametrization. The reduction of $\E$ is given by
 \begin{equation}\label{kl}
v^2_x=\frac{(v^1_{xx})^{m_2+1}}{m_2+1},\ v^3_x=\frac{(v^1_{xx})^{m_3+1}}{m_3+1},\ \dots\
v^n_x=\frac{(v^1_{xx})^{m_n+1}}{m_n+1}
 \end{equation}
(the superscript numbers the unknown functions). The internal symmetry algebra is maximal
for $m_2=1$, $m_3=2$, \dots, $m_n=n-1$, and has dimension $(2n+5)$;
the proof of this fact is similar to the proof of Theorem \ref{gCE} and so is omitted.
In fact, the rank 2 distribution of ODE system (\ref{kl}) with the prescribed parameters
is internally equivalent to the most symmetric Monge equation \cite{AK}
(subscripts denote the derivatives)
 $$
y_1=(z_n)^2
 $$
(its symmetry algebra with notation $\mathfrak{t}_{1,n}=\hat{\mathfrak{p}}_{n+3}$
was studied in \cite{AK}).

The symmetry algebra is the same for the ODE and PDE systems for all $m_i$, but
for generic parameters the algebra has smaller dimension.

Now the above PDE system $\E$ can be generalized to higher orders. For the third order
it writes as
 $$
\Bigl\{u_{ijk}=\frac{\l^{m_i+m_j+m_k+1}}{m_i+m_j+m_k+1}:1\le i\le j\le k\le n\Bigr\}.
 $$
It is involutive, but to achieve complete non-holonomy (no first integrals),
we have to assume that all the numbers $m_j+m_k$, $1\le j\le k\le n$, are different.
The reduction can be again easily described.

Consider a particular case $n=3$, $m_1=0$, $m_2=1$, $m_3=4$, satisfying the above restriction.
The symmetry algebra of both ODE reduction and the PDE system is 15-dimensional.

Similarly for $n=4$ independent variables, and parameters $m_1=0$, $m_2=1$, $m_3=4$,
$m_4=15$, we calculate both symmetry algebras to have dimension 21.

\smallskip

We conclude that the generalized Lie-B\"acklund theorem holds in many interesting cases.
The calculations are easier for the reduced ODE models, where Tanaka theory helps to
calculate symmetries by using the algebraic methods.

\appendix
\section{Truncated double-graded free Lie algebra}\label{A}

A double-graded free Lie algebra is such a graded Lie algebra
$\mathfrak{n}_\infty=\oplus_{i<0}\g_i$ that its fundamental space $\g_{-1}$
is a direct sum of two subspaces
$\Pi_1,\Pi_2$, and its commutators freely generate the whole algebra with no other relations than
the commutation of $\op{ad}_{v_1}$, $\op{ad}_{v_2}$ for $v_i\in\Pi_i$
(whence $\Z$-grading can be refined to $\Z\oplus\Z$-grading).

Let us specify this only in the case of current interest $\dim\g_{-1}=2$, when the fundamental space
has basis $e_{10},e_{01}$ according to the above splitting.
Then $\g_{-2}$ is generated by $e_{11}=[e_{10},e_{01}]$,
$\g_{-3}$ by $e_{21}=[e_{10},e_{11}]$, $e_{12}=[e_{01},e_{11}]$ etc.
The only relations this infinite-dimensional Lie algebra $\mathfrak{n}_\infty$ admits are
$[e_{10},e_{i,j+1}]=[e_{01},e_{i+1,j}]=e_{i+1,j+1}$.

Thus $\g_{-k-1}=\langle e_{k,1},e_{k-1,2},\dots,e_{1,k}\rangle$ has dimension $k$.
It's easy to check that $\mathfrak{n}_\infty$ is a Lie algebra.
Many interesting graded nilpotent Lie algebras are quotients of this algebra
(see some in \cite{AK}).

The truncated algebra is $\mathfrak{n}_k=\g_{-k}\oplus\dots\oplus\g_{-1}$ (the brackets between
$\g_{-i}$ and $\g_{-j}$ are zero if $i+j>k$). Our goal is to
calculate the Tanaka prolongation of $\mathfrak{n}_k$
(beware: this sub-script $k$ is not a grading).

 \begin{theorem}
For $k>3$ the Tanaka prolongation of $\mathfrak{n}_k$ is supported in non-positive grading:
$\hat{\mathfrak{n}}_k=\mathfrak{n}_k\oplus\g_0$, $\g_0=\op{gl}(\g_{-1})$.
 \end{theorem}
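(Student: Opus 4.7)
The plan is to carry out a standard Tanaka prolongation computation in three steps.

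\textbf{Step 1: $\g_0=\op{gl}(\g_{-1})$.} An element of $\g_0$ is a degree-preserving derivation $D$ of $\m=\mathfrak{n}_k$. Since $\m$ is generated as a Lie algebra by its fundamental space $\g_{-1}$, the restriction map $D\mapsto D|_{\g_{-1}}\in\op{gl}(\g_{-1})$ is injective. Conversely, any $A\in\op{gl}(\g_{-1})$ extends uniquely to a derivation of $\m$ by the Leibniz rule: the Jacobi-type defining relations of $\mathfrak{n}_\infty$ are automatically respected, and the truncation relations $[\g_{-i},\g_{-j}]=0$ for $i+j>k$ are preserved because for a degree-preserving map both sides of $D[x,y]=[Dx,y]+[x,Dy]$ already lie in a zero component. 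Hence $\g_0\simeq\op{gl}(\g_{-1})\simeq\op{gl}_2$.

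\textbf{Step 2: reduction to $\g_1=0$.} For $D\in\g_l$ with $l\geq 1$, the assignment $D\mapsto D|_{\g_{-1}}\in\op{Hom}(\g_{-1},\g_{l-1})$ is injective: if $D$ vanishes on $\g_{-1}$, the Leibniz rule forces $D=0$ on all of $\m$. Hence $\g_l\hookrightarrow\op{Hom}(\g_{-1},\g_{l-1})$, and an induction on $l$ reduces the vanishing of the positive part to the single assertion $\g_1=0$.

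\textbf{Step 3: $\g_1=0$ for $k>3$.} Any candidate $\phi\in\g_1$ is determined by $A=\phi(e_{10})$, $B=\phi(e_{01})\in\g_0$; extending by Leibniz produces a candidate degree-$1$ map on all of $\m$. The double-bracket identities $[e_{10},e_{i,j+1}]=[e_{01},e_{i+1,j}]$ of $\mathfrak{n}_\infty$ are automatic for any Leibniz extension. Non-trivial consistency arises only from the truncation: whenever $x\in\g_{-i}$, $y\in\g_{-j}$ with $i+j=k+1$, the identity $[x,y]=0$ in $\mathfrak{n}_k$ forces
$$
[\phi(x),y]+[x,\phi(y)]=0\quad\text{in }\g_{-k}.
$$
Specializing $x\in\{e_{10},e_{01}\}$ and letting $y$ range over the basis $\{e_{ij}:i+j=k\}$ of $\g_{-k}$ gives an explicit linear system in the entries of $A,B$, whose coefficients are read off from the structure constants of $\mathfrak{n}_\infty$. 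Exploiting the bi-grading of $\mathfrak{n}_k$, this system decouples into bi-weight blocks, and I would verify by direct computation that only the trivial solution $A=B=0$ survives when $k>3$.

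The main obstacle is this final computation. The threshold $k>3$ is essential: at $k=3$ the algebra $\mathfrak{n}_3$ is the Carnot algebra of the $(2,3,5)$-distribution, whose Tanaka prolongation contains $G_2$ in positive grading, so sufficient depth is needed to generate enough independent truncation relations to rule out all non-trivial deformations. Packaging the constraints uniformly in $k$, using the symmetry $e_{10}\leftrightarrow e_{01}$ and the natural $\op{gl}_2$-equivariance of the system, is where the real technical work lies.
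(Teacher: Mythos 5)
Your Steps 1 and 2 are the standard Tanaka reductions and are correct as far as they go: restriction to $\g_{-1}$ is injective on prolongations because $\m$ is fundamental, and $\g_p=0$ implies $\g_{p+1}=0$, so everything hinges on computing $\g_0$ and $\g_1$. The genuine gap is in Step 3, and it is not merely that the final computation is deferred: the structural claim your plan rests on is false. You assert that the identities $[e_{10},e_{i,j+1}]=[e_{01},e_{i+1,j}]$ are ``automatic for any Leibniz extension'' of a degree-one map, so that only truncation relations constrain $\phi$. They are not. The algebra $\mathfrak{n}_k$ is not free (the first discrepancy is in grading $-5$: $\dim\g_{-5}=4$ versus $6$ for the free rank-two Lie algebra), and a Leibniz extension of $A=\phi(e_{10})$, $B=\phi(e_{01})$ is well defined only if the different bracket representations of each $e_{ij}$ give the same value. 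Writing $A,B$ in the basis $e_{10},e_{01}$ with entries $a',b',c',d'$ and $a'',b'',c'',d''$, consistency is indeed automatic through grading $-4$, but at grading $-5$ the two representations $e_{32}=[e_{10},e_{22}]=[e_{01},e_{31}]$ give values differing by $(b'-a'')\,e_{31}+(d'-c'')\,e_{22}$, and similarly for $e_{23}$; well-definedness forces $b'=a''$ and $c''=d'$ (for $k\ge5$; for $k=4$ both expressions must vanish outright as truncation relations). These relations are an essential ingredient of the actual proof: only after imposing them does one obtain a well-defined candidate derivation on all of $\m$, a closed inductive formula for $\phi$ on the top grading, and only then do the truncation relations $e_{k+1-i,i}=0$ kill the remaining parameters. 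The same over-optimism infects your Step 1: for a general Carnot algebra an element of $\op{gl}(\g_{-1})$ does \emph{not} extend to a derivation (this is exactly why $\g_0$ can be a proper subalgebra of $\op{gl}(\g_{-1})$), so $\g_0=\op{gl}_2$ also requires verification, e.g.\ via the inductive formula $h(e_{k-i,i})=(i-1)b\,e_{k-i+1,i-1}+((k-i)a+i\,d)\,e_{k-i,i}+(k-i-1)c\,e_{k-i-1,i+1}$; there the conclusion happens to be true, but it is a computation, not a formality.

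Your scheme could in principle be salvaged, since the truncation constraints are necessary conditions: if the linear system they produce in $(A,B)$ had only the trivial solution, $\g_1=0$ would follow. But you have not established this, and as stated you cannot even write the system down, because evaluating $\phi$ on $\g_{-k}$ (needed for your constraints $[\phi(x),y]+[x,\phi(y)]=0$ with $y\in\g_{-k}$) requires choosing a bracket representation for each basis element, and without $b'=a''$, $c''=d'$ the result depends on that choice. So the heart of the proof is missing, and the organizing principle you propose --- ``non-trivial consistency arises only from the truncation'' --- is the opposite of what actually happens: for $k\ge 5$ the decisive first constraints come from the relations of $\mathfrak{n}_\infty$ itself in grading $-5$, and truncation only finishes the job.
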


We consider prolongation in the usual sense, not preserving double-grading (in particular
$\g_0$ consists of grading zero derivations, not necessary preserving the splitting of $\g_{-1}$).

 \begin{proof}
We first state that $\g_0=\op{gl}(\g_{-1})$ is the maximal possible algebra
of grading preserving derivations.
The easiest way to see this is to calculate the prolongation of an element
 $$
h=\begin{bmatrix}a & b \\ c & d \end{bmatrix}\in\g_0
 $$
(the matrix in the basis $e_{10},e_{01}$) and check that as derivation it preserves the
relations $[\op{ad}_{e_{10}},\op{ad}_{e_{01}}]=0$.

This readily follows from the following formula that can be proved by induction ($1\le i\le k-1$):
 $$
h(e_{k-i,i})=(i-1)b\,e_{k-i+1,i-1}+((k-i)a+i\,d)\,e_{k-i,i}+(k-i-1)c\,e_{k-i-1,i+1}.
 $$

Let now $\oo\in\g_1$. Since $\g_{-1}$ is fundamental (generates $\mathfrak{n}_k$), this
element is uniquely determined by specifying $\oo(e_{10})=h'$, $\oo(e_{01})=h''$,
where we denote $\oo(\xi)=[\oo,\xi]$ and the elements of $\g_0$ have the form
 $$
h'=\begin{bmatrix}a' & b' \\ c' & d' \end{bmatrix},\quad
h''=\begin{bmatrix}a'' & b'' \\ c'' & d'' \end{bmatrix}.
 $$
By the Leibniz rule we calculate:
 \begin{gather*}
\oo(e_{11})=(b'-a'')\,e_{10}+(d'-c'')\,e_{01},\\
\oo(e_{21})=(a'+2d'-c'')\,e_{11},\quad \oo(e_{12})=(2a''+d''-b')\,e_{11},\\
\oo(e_{31})=(3a'+3d'-c'')\,e_{21}+c' e_{12},\\
\oo(e_{22})=(2a''+d'')\,e_{21}+(a'+2d')\,e_{12},\\
\oo(e_{13})=b'' e_{21}+(3a''+3d''-b')\,e_{12}.
 \end{gather*}
In calculation of $\oo(e_{22})$ we can use two representations $[e_{10},e_{12}]=[e_{01},e_{21}]$
and this gives the same result. Thus if we truncate on the level $k=3$ (so that $e_{31}=e_{22}=e_{13}=0$),
then all coefficients to the right in the last three lines vanish and we obtain that $\g_1$
is 2-dimensional (in agreement with the known grading of the exceptional Lie group $G_2$).

Let now $k>3$ and we calculate the action on $\g_{-5}$. We get
 \begin{gather*}
\oo(e_{41})=\oo([e_{10},e_{31}])=(6a'+4d'-c'')\,e_{31}+3c' e_{22},\\
\oo(e_{14})=\oo([e_{01},e_{13}])=3b'' e_{22}+(4a''+6d''-b')\,e_{13}
 \end{gather*}
uniquely and
 \begin{align*}
\oo(e_{32})&=\oo([e_{10},e_{22}])=(2a''+d''+b')\,e_{31}+(3a'+4d')\,e_{22}+c' e_{13}\\
&=\oo([e_{01},e_{31}])=(3a''+d'')\,e_{31}+(3a'+3d'+c'')\,e_{22}+c' e_{13},
 \end{align*}
 \begin{align*}
\oo(e_{23})&=\oo([e_{10},e_{13}])=b'' e_{31}+(3a''+3d''+b')\,e_{22}+(a'+3d')\,e_{13} \\
&=\oo([e_{01},e_{22}])=b'' e_{31}+(4a''+3d'')\,e_{22}+(a'+2d'+c'')\,e_{13}
 \end{align*}
non-uniquely, which implies $b'=a''$, $c''=d'$.

Further calculations show that the derivation respects the higher commutation relations,
and we obtain by induction:
 \begin{multline*}
\oo(e_{k-i,i+1})=\frac{i(i-1)}2\,b''\,e_{k-i+1,i-1}
 +\Bigl((k-i)i\,a''+\frac{i(i+1)}2\,d''\Bigr)\,e_{k-i,i}\\
+\Bigl(\frac{(k-i)(k-i-1)}2\,a'+(i+1)(k-i-1)d'\Bigr)\,e_{k-i-1,i+1}\\
 +\frac{(k-i-1)(k-i-2)}2\,c'\,e_{k-i-2,i+2}.
 \end{multline*}
Now it's obvious that truncation on the level $k$, i.e. letting $e_{k+1-i,i}=0$
for $1\le i\le k$,
forces $h'=h''=0$, so that $\g_1=0$.
 \end{proof}


\end{document}